\newcommand{\n}{\mathfrak{n}}
\newcommand{\m}{\mathfrak{m}}
\newtheorem{defi}{\bf Definition}[section]
\newtheorem{theo}[defi]{\bf Theorem}
\newtheorem{lemma}[defi]{\bf Lemma}
\newtheorem{coro}[defi]{\bf Corollary}
\newtheorem{pro}[defi]{\bf Proposition}
\newtheorem{lem}[defi]{\bf Lemma}
\newtheorem{ex}[defi]{\bf Example}
\newtheorem{rem}[defi]{\bf Remark}
\title{Free nilpotent and nilpotent quadratic Lie algebras}
\author{P. Benito,  D. de-la-Concepci\'on, J. Laliena}
\date{\quad}
\begin{document}
\maketitle\vspace{-1.5cm}

\begin{abstract}
In this paper we introduce an equivalence between the category of the $t$-nilpotent quadratic Lie algebras with $d$ generators and the category of some symmetric invariant bilinear forms over the $t$-nilpotent free Lie algebra with $d$ generators. Taking into account this equivalence, $t$-nilpotent quadratic Lie algebras with $d$ generators are classified (up to isometric isomorphisms, and over any field of characteristic zero), in the following cases: $d=2$ and $t\leq 5$, $d=3$ and $t\leq 3$.
\end{abstract}

{\parindent= 2,5em \small  \sl Keywords: Nilpotent Lie algebras, 
invariant nondegenerate symmetric

bilinear forms, free nilpotent Lie algebras. }

{\parindent=10em \small  \sl }

\bigskip

{\parindent=2,5em \small \bf Classification MSC 2010: 17B01, 17B30}

\section{Introduction}

Let  ${\n}$ be a Lie algebra over an arbitrary field $\mathbb{K}$ of characteristic zero. The Lie algebra $\n$ is said to be nilpotent if $\n^{t+1}=0$, where $\n ^t$ is defined inductively as $\n^1 =\n$, $\n^i=[ \n^{i-1}, \n]$. In this case we call $t$ the index of nilpotency of $\n$ and we say that $\n$ is t-nilpotent or also t-step nilpotent ($\n^t\neq 0$). The chain of ideals of $\n$:
\begin{equation}\label{lcs}
\n \supseteq \n^2 \supseteq \dots \supseteq \n^t \supseteq \n^{t+1} \supseteq \dots
\end{equation}
is the well-known lower central series of $\n$. Hence, if $\n$ is t-nilpotent the lower central series finishes after $t+1$ steps.  
\medskip

The type of a nilpotent Lie algebra $\n$ is defined as the codimension of $\n ^2$ in $\n$. Following M. A. Gauger \cite[Section 1, Corollary 1.3]{G}, a set $\mathfrak m = \{ x_1, x_2, \dots, x_d \}$ generates $\n$ if and only if $\{ x_1+\n ^2,\dots x_d+\n ^2\}$ is a basis of $\n/\n ^2$. So, the type of a Lie algebra is the cardinal of every $\mathbb{K}$-linearly independent set,  $\mathfrak m = \{ x_1, x_2, \dots x_d\}$,  such that $\mathfrak t$,  the subspace generated by $\mathfrak m$, satisfies  $\mathfrak t \oplus \n^2 = \n$. The above conditions imply that $\mathfrak m = \{ x_1, \dots, x_d\}$ generates $\n$ as $\mathbb{K}$-algebra and therefore, we can see the elements $x_i\in \mathfrak m$ as a \emph{minimal set of generators} of $\n$.
\medskip

Let  $\mathfrak F \mathfrak L (d)$ be the free Lie algebra on a set of $d$ generators. The free $t$-nilpotent Lie algebra on $d$ generators is denoted $\n _{d,t}$ and defined as the quotient algebra
\begin{equation}\label{feedefinicion}
\n_{d,t}= \mathfrak F \mathfrak L (d) / \mathfrak F \mathfrak L (d)^{t+1}
\end{equation}
Any $t$-nilpotent Lie algebra $\n$ of type $d$ is a homomorphic image of $\n_{d,t}$. According to \cite[Section 1, Propositions 1.4 and 1.5]{G}, $\n \cong \n_{d,t}/I$, with $I$ an ideal of $\n_{d,t}$ such that $I\subseteq\n_{d,t}^2$ and $\n_{d,t}^t\not\subseteq I$.
\medskip

The Lie algebra, $\n$ is said to be quadratic (also known as metric) if it is endowed with a nondegenerate symmetric bilinear form $B$ which is invariant, that is,
\begin{equation}
B([x,y],z)= B(x,[y,z])
\end{equation}
The class of quadratic Lie algebras is related to physics and Riemannian geometry (see M. Bordemann \cite[Section 1]{Bo} for a explicit description of several connections). 
\medskip

Any semisimple Lie algebra is quadratic by means of the Killing form, $B(x,y)=Tr(ad\ x\circ ad\ y)$. In fact, the non degeneration of this trace form characterizes (in characteristic zero) the class of semisimple Lie algebras according to Cartan's Criterion. In the early 1980s, by the independent work of several authors (Kac, Favre and Santharouban, Medina and Revoy, Hofmann and Keith),
a recursive method to construct quadratic Lie algebras known as the \emph{double extension method} was developed (see \cite[Theorem 2.2]{Bo}). Starting with an abelian Lie algebra of dimension $0$ or $1$, and using one-dimensional central extensions and skew symmetric derivations, the method let us construct every finite-dimensional quadratic solvable Lie algebra. But this method seems to be difficult in high dimensions due to the multistep procedure.
\medskip

In recent years, we can find different research papers on the structure of nilpotent quadratic Lie algebras and also about partial classifications of them. For instance, the complete list of Lie algebras in this class has been given by G. Favre and I.J. Santharoubane \cite{F-S} in 1987 up to dimension $7$ and in 2007 by I. Kath \cite{K} up to dimension $10$ (only over the real field). The classification of solvable quadratic Lie algebras over algebraically closed fields given by M.T. Duong and R. Ushirobira \cite{D-U} in 2014 includes the complete list of nilpotent quadratic of dimension $8$. In \cite {O}, G. Ovando showed that there are $2$-step nilpotent quadratic Lie algebras of any type $d\geq 3$ with the exception of $d=4$; this fact was previously pointed out by Tsou and Walker \cite{T-W} in 1957. Following L. Noui and Ph. Revoy, \cite{N-R}, the classification of $2$-step nilpotent quadratic Lie algebras is reduced to the classification of alternating trilinear forms. In 2012, V. del Barco and G. Ovando \cite {B-O} proved that $\n_{3,2} $ and $\n_{2,3}$ are the unique free nilpotent quadratic Lie algebras using elementary properties of quadratic algebras. The classifications we have mentioned are mainly based on the double extension method. In this work we develop a general classification scheme for quadratic nilpotent Lie algebras based on the use of invariant bilinear forms on free nilpotent Lie algebras.
\medskip

The paper is divided into 5 sections from number $2$. Section 2 is devoted to give some examples of quadratic Lie algebras and examples of invariant bilinear forms on free nilpotent Lie algebras $\n_{d,t}$ of small dimensions. The group ${\rm Aut}\, \n_{d,t}$ of automorphisms of $\n_{d,t}$ is described in Section 3. A categorical approach between free nilpotent Lie algebras and their invariant bilinear forms and quadratic nilpotent Lie algebras is explained in Section 4. This approach let us identify the isometric isomorphism classes of quadratic $t$-nilpotent Lie algebras of type $d$ with the union of spaces of orbits of the natural action of the group ${\rm Aut}\, \n_{d,t}$ over some sets of invariant bilinear forms. The results given in Section 4 are used in Section 5  to classify up to isometric isomorphisms all the $t$-nilpotent quadratic Lie algebras in characteristic zero, with $d$ generators for $d=2$ and $t\leq 5$, $d=3$ and $t\leq 3$. The final Section includes the complete list of these algebras (basis and associated invariant bilinear form) over an algebraically closed field. Apart from the abelian $1$-dimensional, there are only six indecomposable quadratic algebras. The list includes the classification of the nilpotent quadratic Lie algebras up to dimension 7 and most of nilpotent quadratic Lie algebras of dimension 8. The section ends with the complete list of these algebras over the real field.
\bigskip

Throughout the paper $\mathbb{K}$ will denote an arbitrary field of characteristic zero except where otherwise specified. 
\section{Preliminaries and examples}

Let $\n_{d,t}$ be the free $t$-nilpotent Lie algebra on the set of generators $\m=\{x_1,\dots,x_d\}$. If $\mathfrak{s}_ i$ is the vector subspace generated by $\m^i$, and $\m^i$ denotes the set of all possible products made with $i$ elements of $\m$,  we have that
\begin{equation}
\n_{d,t}=\oplus _{k=1}^t \mathfrak{s}_k,
\end{equation}where $\oplus$ denotes the direct sum as vector spaces. This decomposition gives us a natural graduation on $\n_{d,t}$ that provides many structure results of free nilpotent Lie algebras in an easy way. Following M. Hall \cite{H}, we can get the well-known \emph{Hall basis of $\n_{d,t}$} whose elements are monomials in the generators. From M. Grayson and R. Grossman \cite[Definition 1.1]{G-G} each element in the Hall basis is defined recursively as:
\begin{enumerate}
\item $x_1, \dots, x_d$ are (ordered) elements of the basis of length 1.
\item The elements of lengths $1,\dots , r-1$ are simply ordered so that $a<b$ if $\text{length}(a)<\text{length}(b)$.
\item If $\text{length}(a)=s$ and $\text{length}(b)=t$ and $r=s+t$, $[a,b]$ is a basis element if: $a,b$ are basis elements and $a>b$; and in case $a=[c,d]$, then $b\geq d$. 
\end{enumerate} From the Hall basis, inductively we get the dimension of any subspace $\mathfrak{s}_l$, $l=1,\dots, t$ as
\begin{equation}\label{fdimensionsl}
\frac{1}{l}\sum_{a\mid l}\mu(a)d^{l/a},
\end{equation}where $\mu$ is the M\"oebius function.

\begin{ex}\label{Hallbasis} For $d=2,3$ and $t\leq 5$, the Hall basis (ordered), $\mathcal{H}_{d,t}$, of $\n_d,t$ are given as:
\begin{itemize}
\item $\mathcal{H}_{2,2}=\{x_1,x_2, [x_2,x_1]\}$,
\item $\mathcal{H}_{2,3}=\mathcal{H}_{2,2}\cup \{[[x_2,x_1],x_1],[[x_2,x_1],x_2]\}$;
\item $\mathcal{H}_{2,4}=\mathcal{H}_{2,3}\cup \{[[[x_2,x_1],x_1],x_1],[[[x_2,x_1],x_1],x_2], [[[x_2,x_1],x_2],x_2]\}$;
\item $\mathcal{H}_{2,5}=\mathcal{H}_{2,4}\cup \{[[[[x_2,x_1],x_1],x_1],x_1], [[[[x_2,x_1],x_1],x_1],x_2],[[[x_2,x_1],x_1],[x_2,x_1]],$ $[[[[x_2,x_1],x_1],x_2],x_2],[[[x_2,x_1],x_2],[x_2,x_1]], [[[[x_2,x_1],x_2],x_2],x_2]\}$; 
\item $\mathcal{H}_{3,2}=\{x_1,x_2,x_3, [x_2,x_1], [x_3,x_1], [x_3,x_2]\}$;
\item $\mathcal{H}_{3,3}=\mathcal{H}_{3,2}\cup \{[[x_2,x_1],x_j], [[x_3,x_1],x_j],[[x_3,x_2],x_k]: j=1,2,3, k=2,3\}$.
\end{itemize}
\end{ex}
For every Lie algebra $\mathfrak{g}$ we can define recursively the upper central series:
\begin{equation*}
Z_1(\mathfrak{g})=0 \quad \text{and}\quad 
Z_{i}(\mathfrak{g})=\{x\in \mathfrak{g}: [x,\mathfrak{g}]\subseteq Z_{i-1}(\mathfrak{g})\}.
\end{equation*}
Note that the center of $\mathfrak{g}$, $Z(\mathfrak{g})=\{x\in \mathfrak{g}: [x,\mathfrak{g}]=0\}$ is just $Z_2(\mathfrak{g})$.

In the case that $(\mathfrak{g},B)$ be quadratic (then $B$ is a nondegenerate symmetric invariant bilinear form), we have the following relation of orthogonality (see A. Meedina and Ph. Revoy \cite[Proposition 1.2]{M-R}) between the ideals in the lower central series of $\mathfrak{g}$ defined in (\ref{lcs}) and those of its upper central series ($\mathfrak{a}^\perp=\{x\in \mathfrak{g}:B(x,\mathfrak{a})=0\}$):
\begin{equation}
(\mathfrak{g}^i)^\perp=Z_{i}(\mathfrak{g})
\end{equation}
Hence, in any Lie algebra equipped with an invariant nondegenerate symmetric bilinear form the next equality holds:
\begin{equation}\label{fdimension}
\text{dim}\, \mathfrak{g}=\text{dim}\, \mathfrak{g}^i+\text{dim}\, Z_{i}(\mathfrak{g})
\end{equation}
For nilpotent Lie algebras, both series have the same length and this length determines the nilpotency index  of the algebra. In fact, $\mathfrak{g}$ is $t$-nilpotent if and only if $\mathfrak{g}^{t+1}=0$ if and only if $Z_{t+1}(\mathfrak{g})=\mathfrak{g}$. The upper central series of the free nilpotent Lie algebra $\n_{d,t}$ is:
\begin{equation}
Z_{i}(\n_{d,t})=\oplus_{k\geq t+2-i}\  \n_{d,t}^k\,.
\end{equation}

\begin{ex}\label{qabeliana} Any abelian Lie algebra $\n$ endowed with a nondegenerate bilinear form $B$ is a quadratic Lie algebra. Moreover, taking a fixed basis $\{x_1,\dots,x_d\}$ for $\n$, two quadratic Lie algebras $(\n,B)$ and $(\n,B')$ are (isometric) isomorphic if and only if the matrices $A_B=(B(x_i,x_j))$ and $A_{B'}=(B'(x_i,x_j))$ are congruents, i.e.: $A_{B'}=P^tA_BP$ for some regular matrix $P$.\end{ex}

\begin{ex} From \emph{(\ref{fdimension})}, in any quadratic Lie algebra $\mathfrak{g}$ the following equality holds $\text{dim}\, Z(\mathfrak{g})=\text{dim}\, \mathfrak{g}-\text{dim}\, \mathfrak{g}^2$. So, filiform algebras are not quadratic and the unique quadratic quasifiliform Lie algebra is $\n_{2,3}$. Generalized Heissenberg algebras are not quadratic (the general definition for GH is $\mathfrak{h}_n$ such that $\mathfrak{h}_n^2=Z(\mathfrak{h}_n)=\mathbb{K}\cdot z$).%More general: any $2$-step nilpotent quadratic algebra of dimension odd is decomposable.
\end{ex}
The following example summarizes part of the results on quadratic free nilpotent Lie algebras therein V. del Barco and G. Ovando \cite{B-O}:

\begin{ex} The center of $\n_{d,t}$ is exactly, $Z(\n_{d,t})=\n_{d,t}^t$. According to formulas \emph{(\ref{fdimensionsl})} and \emph{(\ref{fdimension})}, the dimension of $Z(\n_{d,t})$ is equal to $d_Z(d,t)=\frac{1}{t}\sum_{a\mid t}\mu(a)d^{t/a}$. From \emph{\cite[Proposition 3.7]{B-O}}, $2\leq d<d_Z(d,t)$ in case $t\geq 4$. On the other hand, $d_Z(d,1)=d$, $d_Z(d,2)=\frac{d(d-1)}{2}$ and $d_Z(d,3)=\frac{d(d^2-1)}{3}$. So, the only possible free nilpotent Lie algebras are $\n_{d,1}$, $\n_{3,2}$ and $\n_{2,3}$. From \emph{Example \ref{qabeliana}}, $n_{d,1}$ is quadratic; several nonisomorphic possibilities can occur depending on the base field. So are the Lie algebras  $\n_{2,3}$ and $\n_{3,2}$. In fact from \emph{G. Favre and L.J. Santharouban \cite{F-S}}, over algebraically closed fields of characteristic zero, any quadratic Lie algebra $(\n_{2,3}, B)$ or $(\n_{3,2}, B')$ is isometric to (the matrices are given in the basis $\mathcal{H}_{3,2}$ and $\mathcal{H}_{2,3}$):
\begin{equation}\label{2332invariant}
(\n_{3,2}, \varphi_{3,2}): \begin{pmatrix} 0 & 0 & 0 & 0 & 0 & 1 \\ 0 & 0 & 0 & 0 & -1 & 0 \\ 0 & 0 & 0 & 1 & 0 & 0 \\ 0 & 0 & 1 & 0 & 0 & 0  \\ 0 & -1 & 0 & 0 & 0 & 0 \\ 1 & 0 & 0 & 0 & 0 & 0 \end{pmatrix},\qquad (\n_{2,3}, \varphi_{2,3}): \begin{pmatrix} 0 & 0 & 0 & 0 & 1 \\ 0 & 0 & 0 & -1 & 0 \\ 0 & 0 & 1 & 0 & 0 \\ 0 & -1 & 0 & 0 & 0 \\ 1 & 0 & 0 & 0 & 0 \end{pmatrix}
\end{equation}
The results in Section 5 of this paper show that $(\n_{3,2}, \varphi_{3,2})$ is, up to isometries, the unique quadratic $2$-nilpotent Lie algebra of type $3$ over any field of characteristic zero. Over the real field, $(\n_{2,3}, \varphi_{2,3})$ is one of the two nonisometric quadratic $3$-nilpotent Lie algebra of type $2$. The other one is $(\n_{2,3},- \varphi_{2,3})$
\end{ex}

The next examples provide the whole vector space of invariant symmetric bilinear forms of $\n_{2,t}$ and $\n_{3,t}$ for small nilpotent index. The matrices are given in the Hall basis $\mathcal{H}_{d,t}$; they follow from a straightforward computation by using the definition of invariant bilinear form, and the Jacobi identity (this method is used in the proof of Theorem 3.8 in \cite{B-O} for $\n_{3,2}$).

\begin{ex}\label{formasinvariantespequenas} Any invariant symmetric bilinear form on $\n_{2,t}$ for $t\leq 5$ is of the form:

{\small \begin{equation}
B_{2,1}^{A_1}=\begin{pmatrix} \alpha & \beta \\ \beta & \delta \end{pmatrix}, B_{2,2}^{A_1}=\left( \begin{array}{cc|c} \alpha &\beta & 0 \\ \beta& \delta & 0 \\ \hline 0& 0& 0\end{array}\right), B_{2,3}^{A_1;\gamma}=\left( \begin{array}{cc|c|cc} \alpha &\beta & 0 & 0 & \gamma \\ \beta&\delta& 0 & -\gamma & 0 \\ \hline 0& 0 & \gamma & 0 & 0 \\ \hline 0 & -\gamma & 0 & 0 & 0 \\ \gamma & 0 & 0 & 0 & 0 \end{array}\right)
   \end{equation}}
   {\small \begin{equation}  
     B_{2,4}^{A_1;\gamma}= \left(\begin{array}{cc|c|cc|ccc} \alpha & \beta & 0 & 0 & \gamma & 0 & 0 & 0 \\  \beta & \delta & 0 & -\gamma & 0 & 0 & 0 & 0 \\ \hline 0 & 0 & \gamma & 0 & 0 & 0 & 0 & 0 \\  \hline 0 & -\gamma & 0 & 0 & 0 & 0 & 0 & 0 \\  \gamma &  0 & 0 & 0 & 0 & 0 & 0 & 0\\ \hline 0 & 0 & 0& 0 & 0 & 0 & 0 & 0 \\ 0 & 0 & 0 & 0 & 0 & 0 & 0 & 0 \\ 0 & 0 & 0 & 0 & 0 & 0 & 0 & 0 \end{array}\right)
   \end{equation}
}
   {\small   \begin{equation}
  B_{2,5}^{A_1;\gamma;A_2}= \left( \begin{array} {rr|r|rr|rrr|rrrrrr} \alpha & \beta & 0 & 0 & \gamma & 0 & 0 & 0 & 0 & -d &d & -e & e & -f\\ \beta & \delta & 0 & -\gamma& 0 & 0 & 0  & 0 & d & 0 &  e & 0 & f & 0 \\ \hline 0 & 0 & \gamma & 0 & 0 & -d & -e  & -f & 0 & 0 & 0 & 0  & 0 & 0 \\ \hline 0 & -\gamma & 0 & d & e & 0 & 0 & 0 & 0 & 0 & 0 & 0 & 0 & 0 \\ \gamma &  0 & 0 & e & f & 0 & 0 & 0 & 0 & 0 & 0 & 0 & 0 & 0\\ \hline 0 & 0 & -d & 0 & 0 & 0 & 0 & 0 & 0 & 0 & 0 & 0 & 0 & 0  \\  0 & 0 & -e & 0 & 0 & 0 & 0 & 0 & 0 & 0  & 0 & 0 & 0 & 0 \\ 0 & 0 & -f& 0 & 0 & 0& 0 & 0 & 0 & 0 & 0 & 0 & 0 &   0 \\\hline 0 & d & 0 & 0 & 0 & 0 & 0 & 0 & 0 & 0 & 0 & 0 & 0 & 0  \\ -d& 0 & 0 & 0 & 0 & 0  & 0 & 0 & 0 & 0 & 0 & 0 & 0 & 0 \\  d &  e & 0 & 0 & 0 &  0 & 0 & 0 & 0 & 0 & 0 & 0 & 0 & 0\\ -e& 0 & 0 & 0 & 0 & 0 & 0 & 0 & 0 & 0 & 0 & 0 & 0 & 0 \\ e & f & 0 & 0 & 0 & 0 & 0 & 0 & 0 & 0 & 0 & 0 & 0 & 0  \\ -f & 0 & 0 & 0 & 0 & 0 & 0 & 0 & 0 & 0 & 0 & 0 & 0 & 0    
\end{array} \right )
      \end{equation}}where $A_1=\begin{pmatrix} \alpha & \beta \\ \beta & \delta \end{pmatrix}$ and $A_2=\begin{pmatrix} d & e \\ e & f \end{pmatrix}$ are $2\times 2$ symmetric matrices and $\gamma\in \mathbb{K}$. This implies that the quadratic dimension of $\n_{2,1}, \n_{2,2}, \n_{2,3}, \n_{2,4}$ and $\n_{2,5}$ are $3, 3, 4, 4$ and $7$ respectively. 
\end{ex}

\begin{ex}\label{formasinvariantespequenas2}Any invariant symmetric bilinear form of $\n_{3,t}$ for $t\leq 3$ is of the form:
    {\small   \begin{equation}
  B_{3,1}^{A_1}=\begin{pmatrix} \alpha & \beta& \gamma \\ \beta & \delta &\epsilon\\\gamma & \epsilon & \omega \end{pmatrix}, B_{3,2}^{A_1;\lambda}=\left( \begin{array} {ccc|ccc}\alpha & \beta & \gamma & 0 & 0 & \lambda \\ \beta & \delta & \epsilon & 0 & -\lambda & 0 \\ \gamma & \epsilon & \omega & \lambda & 0 & 0 \\  	\hline 0 & 0 & \lambda & 0 & 0 & 0 \\ 0 & -\lambda & 0 & 0 & 0 & 0 \\ \lambda & 0 & 0 & 0 & 0 & 0 \end{array}\right)
 \end{equation}
 }
     {\small   \begin{equation}
  B_{3,3}^{A_1;\lambda;A_2}=\left( \begin{array} {rrr|rrr|rrrrrrrr} \alpha & \beta & \gamma & 0 & 0 & \lambda & 0 & a & b & 0 & b & d & c &  e \\ \beta & \delta & \epsilon & 0 & -\lambda& 0 & -a & 0 & c & -b & 0 & e & 0 & f \\ \gamma & \epsilon & \omega & \lambda & 0 & 0 & -b & -c & 0 & -d& -e & 0 & -f & 0 \\ \hline0 & 0 & \lambda & a & b & c & 0 & 0 & 0 & 0 & 0 & 0 & 0 & 0 \\0 & -\lambda & 0 & b & d& e & 0 & 0 & 0 & 0 & 0 & 0 & 0 & 0\\\lambda & 0 & 0 & c & e & f & 0 & 0 & 0 & 0 & 0 & 0 & 0& 0 \\\hline 0 & -a & -b & 0 & 0 & 0 & 0 & 0 & 0 & 0 & 0 & 0 & 0 & 0 \\a & 0 & -c & 0 & 0 & 0 & 0 & 0 & 0 & 0 & 0 & 0 & 0 & 0\\b & c & 0 & 0 & 0 & 0 & 0 & 0 & 0 & 0 & 0 & 0 & 0 & 0\\0 & -b & -d &  0 & 0 & 0 & 0 & 0 & 0 & 0 & 0 & 0 & 0 & 0\\b & 0 & -e&  0 & 0 & 0 & 0 & 0 & 0 & 0 & 0 & 0 & 0 & 0 \\d &e & 0 & 0 & 0 & 0 & 0 & 0 & 0 & 0 & 0 & 0 & 0 & 0\\ c & 0& -f & 0 & 0 & 0 & 0 & 0 & 0 & 0 & 0 & 0 & 0 & 0\\ e & f & 0 & 0 & 0 & 0 & 0 & 0 & 0 & 0 & 0 & 0 & 0 & 0 \end{array} \right ) 
  \end{equation}
  }where $A_1=\begin{pmatrix} \alpha & \beta& \gamma \\ \beta & \delta &\epsilon\\\gamma & \epsilon & \omega \end{pmatrix}$ and $A_2=\begin{pmatrix} a & b& c \\ b & d &e\\c & e & f \end{pmatrix}$ are $3\times 3$ symmetric matrices and $\lambda\in K$. So the quadratic dimension of $\n_{3,1}$, $\n_{3,2}$ and $\n_{3,3}$ are $6,7$ and $13$ respectively.
\end{ex}

\section{The group of automorphisms of $\n_{d,t}$}

For every $a\in \n_{d,t}$ we have a unique decomposition $a= a_1 + \dots + a_t$ where $a_i\in \mathfrak{s}_i$ and we can consider, for each $k=1, \dots, t$, the projection endomorphism of $\n_{d,t}$, which is a linear endomorphism:
\begin{eqnarray}\label{k-proyecciones}
\begin{split}
e_k \colon & \n_{d,t} &\to &\quad \n_{d,t} \\
&a &\mapsto &\quad e_k(a)= a_k
\end{split}
\end{eqnarray}
We notice that $e_k$ are idempotent endomorphisms of $\n_{d,t}$ and:
$$e_1 + \dots + e_t= \mathrm{I}_{\n_{d,t}},$$ where $\mathrm{I}_{\n_{d,t}}$ is the identity homomorphism of $\n_{d,t}$.
 
Following T. Sato \cite[Proposition 3]{S}, any linear map $\varphi:\mathfrak{s}_1\to\n_{d,t}$ may be extended to a (unique) endomorphism as an algebra $\Phi_\varphi:\n_{d,t}\to \n_{d,t}$ in the following way: for any monomial of length $k$, $a=[x_{i_1}\dots x_{i_k}]$, $\Phi_\varphi(a)=[\varphi(x_{i_1})\dots \varphi(x_{i_k})]$. Moreover $\Phi_\varphi$ is an automorphism if and only if $\{e_1(\varphi(x_1)), \dots e_1(\varphi(x_d))\}$ is a set of linearly independent elements of $\mathfrak{s}_1$ if and only if $e_1 \circ \varphi\in GL(\mathfrak{s}_1)$. In this way, $\Phi_{\text{I}_{\mathfrak{s}_1}}=\mathrm{I}_{\n_{d,t}}$ if $\text{I}_{\mathfrak{s}_1}:\mathfrak{s}_1\to\n_{d,t}$ is defined over the generator elements of $\n_{d,t}$ as $\text{I}_{\mathfrak{s}_1}(x_i)=x_i$.

\begin{pro}\label{productosemidirecto} 
The group $\mathrm{Aut}\, \n_{d,t}$ decomposes as a semidirect product of the subgroups $\text H(d,t)=\{\Phi_\varphi: \varphi \in GL(\mathfrak{s}_1)\}$ and $N(d,t)=\{\Phi_\rho: \rho=\mathrm{I}_{\mathfrak{s}_1}+\sigma, \sigma \in \mathrm{End}\,(\mathfrak{s}_1, \n_{d,t}^2)\}$. Even more, $N(d,t)$ is a normal subgroup which is nilpotent  and $H(d,t)$ is isomorphic to the group of regular $d\times d$ matrices with entries in $\mathbb{K}$.
\end{pro}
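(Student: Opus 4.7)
By Sato's characterization recalled above, every automorphism of $\n_{d,t}$ is of the form $\Phi_\varphi$ for a linear map $\varphi:\mathfrak{s}_1\to\n_{d,t}$ with $e_1\circ\varphi\in GL(\mathfrak{s}_1)$. I would start by showing that the ``linear part'' map
\[
\pi:\mathrm{Aut}\,\n_{d,t}\longrightarrow GL(\mathfrak{s}_1),\qquad \pi(\Phi_\varphi)=e_1\circ\varphi,
\]
is a group homomorphism. The key observation is that any algebra endomorphism $\Phi_\alpha$ preserves the power filtration $\n_{d,t}^k$, so when one decomposes $\beta(x)=\sum_k\beta_k(x)$ with $\beta_k(x)\in\mathfrak{s}_k$, the summands $\Phi_\alpha(\beta_k(x))$ for $k\geq 2$ lie in $\n_{d,t}^2$ and contribute nothing to $e_1\circ(\Phi_\alpha\circ\Phi_\beta)|_{\mathfrak{s}_1}$. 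Hence $e_1\circ(\Phi_\alpha\circ\Phi_\beta)=(e_1\circ\alpha)\circ(e_1\circ\beta)$.

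Once $\pi$ is a homomorphism, the algebraic structure drops out easily. Its kernel consists of those $\Phi_\rho$ with $e_1\circ\rho=\mathrm{I}_{\mathfrak{s}_1}$, i.e.\ $\rho=\mathrm{I}_{\mathfrak{s}_1}+\sigma$ with $\sigma\in\mathrm{End}(\mathfrak{s}_1,\n_{d,t}^2)$; this is exactly $N(d,t)$, which is therefore normal. The restriction $\pi|_{H(d,t)}$ is visibly an isomorphism onto $GL(\mathfrak{s}_1)$, and fixing the basis $\{x_1,\dots,x_d\}$ gives $H(d,t)\cong GL_d(\mathbb{K})$. For the explicit factorization of an arbitrary $\Phi_\varphi$ I would set $\psi=e_1\circ\varphi$ and $\rho=\varphi\circ\psi^{-1}$: then $e_1\circ\rho=\mathrm{I}_{\mathfrak{s}_1}$, so $\Phi_\rho\in N(d,t)$, and $\Phi_\rho\circ\Phi_\psi=\Phi_\varphi$ by construction. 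Finally, $H(d,t)\cap N(d,t)=\{\mathrm{I}_{\n_{d,t}}\}$ because any $\sigma\in\mathrm{End}(\mathfrak{s}_1,\n_{d,t}^2)$ that also takes values in $\mathfrak{s}_1$ has image in $\mathfrak{s}_1\cap\n_{d,t}^2=0$.

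The remaining and most delicate point is the nilpotency of $N(d,t)$. I would introduce the filtration
\[
N^{(m)}=\{\Phi\in\mathrm{Aut}\,\n_{d,t}:(\Phi-\mathrm{I}_{\n_{d,t}})(\n_{d,t}^k)\subseteq\n_{d,t}^{k+m}\text{ for all }k\geq 1\},\qquad m=1,\dots,t,
\]
and check $N^{(1)}=N(d,t)$ by induction on bracket length (using that $\Phi_\rho$ is an algebra morphism with $\rho(x_i)-x_i\in\n_{d,t}^2$) and $N^{(t)}=\{\mathrm{I}_{\n_{d,t}}\}$ from $\n_{d,t}^{t+1}=0$. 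The main obstacle is proving the commutator relation $[N^{(i)},N^{(j)}]\subseteq N^{(i+j)}$: writing $\Phi=\mathrm{I}_{\n_{d,t}}+\delta$ and $\Psi=\mathrm{I}_{\n_{d,t}}+\epsilon$ with $\delta(\n_{d,t}^k)\subseteq\n_{d,t}^{k+i}$ and $\epsilon(\n_{d,t}^k)\subseteq\n_{d,t}^{k+j}$, one gets $\Phi\Psi-\Psi\Phi=\delta\epsilon-\epsilon\delta$, which sends $\n_{d,t}^k$ into $\n_{d,t}^{k+i+j}$; rearranging $[\Phi,\Psi]=\Phi\Psi\Phi^{-1}\Psi^{-1}$ and using that $\Phi^{-1},\Psi^{-1}$ preserve the filtration by powers (as algebra automorphisms) yields $[\Phi,\Psi]-\mathrm{I}_{\n_{d,t}}$ landing in $\n_{d,t}^{k+i+j}$ on $\n_{d,t}^k$. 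Iterating this bound then gives $\gamma_t\, N(d,t)\subseteq N^{(t)}=\{\mathrm{I}_{\n_{d,t}}\}$, finishing the argument.
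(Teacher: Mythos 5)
Your proof is correct, but it is organized quite differently from the paper's. The paper proceeds by direct verification: it writes out explicit formulas for products, inverses and conjugates of the maps $\Phi_\varphi$ and $\Phi_\rho$ (e.g.\ $\Phi_{\rho_1}\Phi_{\rho_2}=\Phi_{\mathrm{I}_{\mathfrak{s}_1}+\sigma_1+\Phi_{\rho_1}\sigma_2}$ and $\Phi_{\varphi_1}\Phi_{\rho_1}\Phi_{\varphi_1}^{-1}=\Phi_{\mathrm{I}_{\mathfrak{s}_1}+\Phi_{\varphi_1}\sigma_1\varphi_1^{-1}}$), checks by hand that $H(d,t)$ and $N(d,t)$ are subgroups with $H(d,t)$ normalizing $N(d,t)$, and then exhibits the factorization $\Phi_\varphi=\Phi_{e_1\varphi}\Phi_\rho$; notably, it dismisses both the nilpotency of $N(d,t)$ and the isomorphism $H(d,t)\cong GL_d(\mathbb{K})$ with ``the last assertion is easily checked.'' You instead package the structural statements into the single observation that the linear-part map $\pi(\Phi_\varphi)=e_1\circ\varphi$ is a group homomorphism onto $GL(\mathfrak{s}_1)$ split by $\varphi\mapsto\Phi_\varphi$; normality of $N(d,t)$, the intersection $H\cap N=1$, and the semidirect decomposition then all follow at once, which is cleaner and less computational. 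Your treatment of nilpotency via the filtration $N^{(m)}$ and the estimate $[N^{(i)},N^{(j)}]\subseteq N^{(i+j)}$ is the standard unipotent-group argument and actually supplies the detail the paper omits. The only point you should make explicit is that each $N^{(m)}$ is itself a subgroup (if $\Phi,\Psi\in N^{(m)}$ then $\Phi\Psi-\mathrm{I}=\Phi(\Psi-\mathrm{I})+(\Phi-\mathrm{I})$ still raises filtration degree by $m$, and similarly for $\Phi^{-1}-\mathrm{I}=\Phi^{-1}(\mathrm{I}-\Phi)$); this is needed to pass from ``every single commutator of elements of $N^{(i)}$ and $N^{(j)}$ lies in $N^{(i+j)}$'' to the containment of the generated subgroup $[N^{(i)},N^{(j)}]$, and hence to iterate the bound down to $\gamma_t N(d,t)\subseteq N^{(t)}=\{\mathrm{I}_{\n_{d,t}}\}$.
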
 
	\begin{proof}Firstly we note that the elements of $H(d,t)$ are graded automorphisms and $H(d,t)\cap N(d,t)=1$. For $\varphi_i \in GL(\mathfrak{s}_1)$, we have $\Phi_{\varphi_1}(\Phi_{\varphi_2})^{-1}=\Phi_{\varphi_1}\Phi_{\varphi_2^{-1}}=\Phi_{\varphi_1\varphi_2^{-1}}$, so $H(d,t)$ is a subgroup. Analogously, given $\rho_i=\mathrm{I}_{\mathfrak{s}_1}+\sigma_i$ where $\sigma_i \in \text{End}\,(\mathfrak{s}_1, \n_{d,t}^2)$, $\Phi_{\rho_2}^{-1}=\Phi_\rho$ where $\rho=\mathrm{I}_{\mathfrak{s}_1}-{\Phi_{\rho_2}}^{-1}\sigma_2$. In addition, $\Phi_{\rho_1}\Phi_{\rho_2}=\Phi_\rho$ with $\rho=\mathrm{I}_{\mathfrak{s}_1}+\sigma_1+\Phi_{\rho_1}\sigma_2$ and $\Phi_{\varphi_1}\Phi_{\rho_1}(\Phi_{\varphi_1})^{-1}=\Phi_\rho$ where $\rho=\mathrm{I}_{\mathfrak{s}_1}+\Phi_{\varphi_1}\sigma_1\varphi_1^{-1}$. Hence, $N(d,t)$ is a subgroup and $H(d,t)$ is contained in $N_{\text{Aut}\, \n_{d,t}}(N(d,t))$, the normaliser of $N(d,t)$ in $\text{Aut}\, \n_{d,t}$. Now every $\varphi:\mathfrak{s}_1\to\n_{d,t}$ that satisfies $e_1\circ \varphi\in GL(\mathfrak{s}_1)$ decomposes in the form:
	$$
	\Phi_\varphi=\Phi_{e_1\varphi}\Phi_\rho, \rho=\mathrm{I}_{\mathfrak{s}_1}+e_2(\Phi_{e_1\varphi})^{-1}e_2\varphi+\dots e_t(\Phi_{e_1\varphi})^{-1}e_t\varphi.
	$$Hence $N(d,t)$ is a normal subgroup and $\text{Aut}\, \n_{d,t}$ is a semidirect product of $H(d,t)$ and $N(d,t)$. The last assertion is easily checked.\end{proof}
	 
For technical purposes, we give the following alternative description for the subgroups $H(d,t)$ and $N(d,t)$ in Proposition \ref{productosemidirecto}. If $\varphi \in \text{Aut}\, \n_{d,t}$, we have that $\varphi (\mathfrak{s}_i)\subseteq \oplus _{j=i}^t \mathfrak{s}_j$ for $i=1,\dots, t$.  Therefore  for $j<k$, $e_j \varphi e_k =0$. So
\begin{equation}
 \varphi = \sum_{j,k=1}^t e_j \varphi e_k= \sum_{t\geq j \geq jk  \geq 1} e_j \varphi e_k,
\end{equation}
and we have that
\begin{equation}\label{H(d,t)}
H(d,t)=\{ \sum_{i=1}^t e_i\varphi e_i \quad / \quad \varphi \in \text{Aut}\, \n_{d,t}\}
\end{equation}
\begin{equation}\label{N(d,t)}
N(d,t)= \{ \sum_{t\geq j> k \geq 1} e_j \varphi e_k + Id \quad / \quad \varphi \in \text{Aut}\,\n_{d,t}\}
\end{equation}

\noindent Using Proposition \ref{productosemidirecto} and the Hall basis $\mathcal{H}_{d,t}$ described in Example \ref{Hallbasis}, from straightforward computations we get the following examples:

\begin{ex}
If we consider in $\n_{2,3}$ the basis $\mathcal{H}_{2,3}$, the elements of $\mathrm{Aut}\,\n_{2,3}$ can be identified with matrices in $GL(5)$. In fact, the elements of $H(2,3)$ and $N(2,3)$are given by matrices
\begin{equation}
\left ( \begin{array} {c  | c |  c}
  A & 0 & 0  \\ \hline 0 & det(A) & 0 \\ \hline 0 & 0 & det(A) A
   \end{array}
    \right ) \quad \text{and} \quad \left ( \begin{array} {c c  | c | c c} 1 & 0 &  0 & 0 & 0 \\  0 & 1 & 0 & 0 & 0 \\ \hline \alpha & \beta & 1 & 0 & 0 \\ \hline \delta & \gamma & \beta & 1 & 0 \\ \mu & \epsilon & -\alpha & 0 &  1   \end{array} \right ),
\end{equation}
where $A\in GL(2)$, and $\alpha, \beta,  \delta, \gamma, \epsilon, \mu \in \mathbb{K}$. It is easy to check that $N(2,3)$ satisfies $N(2,3)^2=1$ (here $N(2,3)^2$ means $[N(2,3),[N(2,3),N(2,3)]]$ with $[N(2,3),N(2,3)]$ the commutator of two subgroups, i.e, $[P,Q]=\{[p,q]=p^{-1}q^{-1}pq: p\in P, q\in Q\}$).
\end{ex}

\begin{ex}
Consider now $\n_{3,2}$ and the basis $\mathcal{H}_{3,2}$. The matrices representing the elements of $\mathrm{Aut}\,\n_{3,3}$ in relation to $\mathcal{H}_{3,2}$ are given by 
$$\left ( \begin{array} { c | c } A & 0 \\ \hline B &  \begin{array}{ccc} A_{3,3} & A_{3,2} & A_{3,1} \\ A_{2,3} & A_{2,2} & A_{2,1} \\ A_{1,3} & A_{1,2} & A_{1,1}\end{array}  \end{array} \right )$$
where $A\in GL(3)$, $B\in Mat_{3,3}(\mathbb{K})$ (where $Mat_{3,3}(\mathbb{K})$ denotes the set of $3\times 3$ matrices with entries in $\mathbb{K}$), and the $A_{i,j}$ matrices are obtained deleting row $i$ and column $j$ in $A$ and taking the determinant of the resulting matrix. In this case $H(3,2)$ and $N(3,2)$ are given respectively by matrices of the types
\begin{equation}
\left ( \begin{array} { c | c } A & 0 \\ \hline 0 &  \begin{array}{ccc} A_{3,3} & A_{3,2} & A_{3,1} \\ A_{2,3} & A_{2,2} & A_{2,1} \\ A_{1,3} & A_{1,2} & A_{1,1}\end{array}  \end{array} \right )  \quad \text{and} \quad  \left ( \begin{array} { c | c } I_3 & 0 \\ \hline B &  I_3  \end{array} \right )
\end{equation}
where $I_3$ denotes the  $3\times 3$ identity matrix.
\end{ex}

\section{Free and quadratic nilpotent Lie algebras}

In this section we shall introduce a new technique of constructing quadratic nilpotent Lie algebras out of free nilpotent Lie algebras endowed with an invariant symmetric bilinear form.
\medskip 

\noindent 4.1. {\bf Categorical approach.} Following \cite{G}, any $t$-step nilpotent Lie algebra $\n$ of type $d$ is a homomorphic image of $\n_{d,t}$. In fact, given an arbitrary isomorphism of Lie algebras $ \varphi  \colon \n_{d,t}/ I \to \n$ the ideal $I$ satisfies that $\n_{d,t}^t\not\subseteq I\subseteq \n_{d,t}^2$. Indeed, if $x\in I$ and $x\notin \n_{d,t}^2$, then from the introduction,  $x$ can be seen as a generator of $\n_{d,t}$, but this is a contradiction because $\n_{d,t} / I \cong \n$ and $\n$ has $d$ generators. On the other hand, if $\n_{d,t}^t \subseteq I$, we have $( \n_{d,t} /I)^t =0$ and then $\n^t=0$, a contradiction with the $t$-nilpotency of $\n$.

Let $(\n, B)$ be a $t$-nilpotent quadratic Lie algebra of type $d$ and $ \varphi  \colon \n_{d,t}/ I \to \n$ be any isomorphism of Lie algebras. We can define on $\n _{d,t}$ the following symmetric bilinear form:
\begin{equation}\label{invarianteinducida}
B_1 (x, y) = B(\varphi (x+ I), \varphi(y+ I)).
\end{equation}
Using that $\varphi$ is isomorphism and that $B$ is an invariant form on $\n$, we easily check that $B_1$  is an invariant form:
\begin{eqnarray*}
B_1([x,y],z)&= &B(\varphi( [x+I, y+I]), \varphi (z + I)] = B[(\varphi (x+I), \varphi (y+I)], \varphi (z+I))\\ &=& B(\varphi (x+I), [\varphi (y+I), \varphi(z+I)]) =  B_1(x,[y,z]).
 \end{eqnarray*}
We also note that if $ B_1(x,y) =0$ for every $y\in \n_{d,t}$, then $0= B(\varphi(x+I), z)$ for every $z\in \n$ because  $\varphi$ is surjective. Since $B$ is nondegenerate we get that $x+I =0$, that is $Ker (B_1)= I$. Hence $\n_{d,t}^t\not\subseteq Ker (B_1) \subseteq \n_{d,t}^2$.
 
Suppose now that $\n_{d,t}$ is endowed with an invariant symmetric bilinear form $U$ and consider the orthogonal subspace $\n_{d,t}^	\perp=Ker (U)$. For every $x\in Ker (U)$ and $y, z \in \n_{d,t}$
$$U([x,y],z)=U(x,[y,z])=0.$$So, $[Ker (U),\n_{d,t}]\subseteq Ker (U)$ which proves that $Ker (U)$ is an ideal. Then we can define on the Lie algebra $\n_{d,t}/ Ker (U)$ the invariant symmetric nondegenerate bilinear form
\begin{equation}\label{invariantededucida}
\overline {U}(x+Ker(U), y+Ker(U)) = U(x,y).
\end{equation}Following the previous approach, if we start with a $t$-nilpotent quadratic Lie algebra $(\n, B)$ of type $d$ and $\varphi \colon \n_{d,t}/I  \to \n$ is an isomorphism of Lie algebras and $B_1$ is the invariant symmetric bilinear form defined in (\ref{invarianteinducida}), then we have that $(\n_{d,t}/ I, \overline {B_1})$ is a nilpotent quadratic Lie algebra. Even more, $\varphi$ is an isomorphism of Lie algebras and an isometry from $(\n_{d,t}/I, \overline {B_1})$ onto $(\n,B)$, because
$$ \overline{B_1} (x+I, y+I) =  B_1(x,y)= B(\varphi (x+I), \varphi(y+I)).$$
 
Previous discussion can be settled in the following result:

\begin{pro}\label{previosafunctor}
Let $(\n, B)$ be  a quadratic $t$-nilpotent Lie algebra of type $d$ and $\varphi \colon \n_{d,t}/I \to \n$ be an isomorphism of Lie algebras. Then:
\begin{enumerate}
\item[{\rm (i)}] The map $B_1 \colon \n_{d,t} \times \n_{d,t} \to K$  given by $ B_1 (x, y) = B(\varphi(x+I), \varphi(y+I))$ is an invariant symmetric bilinear form on $\n_{d,t}$.
\item[{\rm (ii)}] The orthogonal subspace of $B_1$ is exactly $\n_{d,t}^\perp= Ker (B_1) = I$ and satisfies that $\n_{d,t}^t\nsubseteq I\subseteq \n_{d,t}^2$.
\item[{\rm (iii)}] The map $\overline{B_1} \colon \n_{d,t}/I \times \n_{d,t}/I \to K$ defined as $\overline {B_1}(x+I, y+I)=B_1(x,y)$ is an invariant nondegenerate symmetric bilinear form on $\n_{d,t}/I$.
\item[{\rm (iv)}] $\varphi $ is an isometry from $(\n_{d,t}/I, \overline {B_1})$ onto $(\n, B)$.
\end{enumerate}
\end{pro}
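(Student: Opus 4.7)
The statement is essentially a summary of the computations worked out in the preceding paragraphs, so the plan is to organize those verifications into four steps matching (i)--(iv), pointing out where each piece has already been assembled.

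For (i), I would observe that $B_1$ is manifestly bilinear and symmetric, since both the natural projection $\n_{d,t}\to\n_{d,t}/I$ and the isomorphism $\varphi$ are linear and $B$ is symmetric bilinear. Invariance is precisely the three-line calculation displayed just before the statement: insert the definition of $B_1$, use that $\varphi$ is a Lie homomorphism out of the quotient, invoke invariance of $B$ on $\n$, and recollect.

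For (ii), the key point is the equality $Ker(B_1)=I$. The inclusion $I\subseteq Ker(B_1)$ is immediate from the definition. Conversely, if $B_1(x,y)=0$ for every $y\in\n_{d,t}$, then $B(\varphi(x+I),\varphi(y+I))=0$ for every $y$; surjectivity of $\varphi$ together with nondegeneracy of $B$ on $\n$ forces $\varphi(x+I)=0$, whence $x\in I$. The sandwich $\n_{d,t}^t\not\subseteq I\subseteq\n_{d,t}^2$ was already argued at the start of Section~4: any $x\in I\setminus\n_{d,t}^2$ would project to a generator of $\n_{d,t}/I\cong\n$, contradicting that $\n$ has type $d$, while $\n_{d,t}^t\subseteq I$ would force $\n^t=0$, contradicting $t$-nilpotency.

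For (iii) and (iv), once $Ker(B_1)=I$ is available everything is formal. The definition of $\overline{B_1}$ is independent of the chosen representatives because $B_1(x+i,y)=B_1(x,y)$ whenever $i\in I=Ker(B_1)$; symmetry, bilinearity, and invariance descend to the quotient using that $I$ is an ideal (shown in the paragraph preceding the statement by the computation $U([x,y],z)=U(x,[y,z])=0$ for $x\in Ker(U)$); and nondegeneracy of $\overline{B_1}$ is just a restatement of $Ker(B_1)=I$. Finally, $\varphi$ is an isomorphism of Lie algebras by hypothesis, and $\overline{B_1}(x+I,y+I)=B_1(x,y)=B(\varphi(x+I),\varphi(y+I))$ is literally the defining formula of $B_1$, so $\varphi$ preserves the forms and is an isometric isomorphism. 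Since every step is either definitional or already performed in the text, I do not anticipate any real obstacle; the only item needing a short argument is the identification $Ker(B_1)=I$ in (ii), which reduces at once to nondegeneracy of $B$ combined with surjectivity of $\varphi$.
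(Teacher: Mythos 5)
Your proposal is correct and follows essentially the same route as the paper, which proves this proposition precisely by the preceding discussion you cite: the three-line invariance computation for (i), the surjectivity-plus-nondegeneracy argument identifying $Ker(B_1)=I$ for (ii), the observation that the kernel of an invariant form is an ideal so that $\overline{B_1}$ is well defined for (iii), and the defining formula of $B_1$ for (iv). Nothing is missing.
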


Let us define $\bf {NilpQuad_{d,t}} $ the category whose objects are the $t$-nilpotent quadratic Lie algebras $(\n, B)$ of type $d$, and whose morphisms are Lie homomorphisms 
$$\varphi \colon (\n, B) \to (\n^\prime, B^\prime)$$
 such that $B(x,y) = B^\prime (\varphi (x), \varphi(y))$. We will call these Lie homomorphisms, \emph{metric Lie homomorphisms}.

We define also $\bf Sym_0(d,t)$ the category whose objects are the symmetric invariant bilinear forms $B$ on the free Lie algebra $\n_{d,t}$ for which $Ker (B) \subseteq \n_{d,t} ^2$ and $\n_{d,t}^t\nsubseteq Ker (B)$, and whose morphisms are defined as follows: for any $B_1, B_2\in Obj(\bf Sym_0(d,t))$, we introduce the set of metric Lie endomorphisms of $\n_{d,t}$ that respect the kernel of the bilinear form, i.e.:
\begin{align*}
MEnd_\perp (B_1, B_2):=\qquad\qquad\qquad\qquad\qquad\qquad\qquad\qquad\qquad\qquad\qquad\qquad\qquad\\
\{ \varphi \in End(\n_{d,t}) :  B_1(x,y) = B_2(\varphi (x), \varphi(y)), \varphi (Ker (B_1))\subseteq Ker (B_2)\}.
 \end{align*}
The whole set of morphisms from $B_1$ to $B_2$ is defined as the quotient set
\begin{equation}
Hom(B_1, B_2):= MEnd_\perp (B_1, B_2) /\sim
\end{equation}
where $\sim$ is the equivalence relation,
\begin{equation}\label{equivalencia}
\varphi_1\sim \varphi_2 \Longleftrightarrow (\varphi_1-\varphi_2)(\n_{d,t})\subseteq Ker(B_2) \quad \forall_{\varphi_1, \varphi_2\in MEnd_\perp (B_1, B_2))}.
\end{equation}
 
The morphisms of $\bf Sym_0(d,t)$ are well-defined: if $\varphi  \in MEnd_\perp (B_1, B_2)$ and $\psi \in MEnd_\perp (B_2, B_3)$ for $B_1, B_2, B_3 \in Obj (\bf Sym_0(d,t))$, $\psi \circ \varphi (Ker(B_1)) \subseteq (Ker(B_3))$, so $\psi\circ \varphi \in MEnd_\perp (B_1, B_3)$. Moreover, if $[\varphi]=[\varphi']$ and $[\psi]=[\psi']$, since $(\varphi-\varphi')(\n_{d,t})\subseteq Ker(B_2)$ and $(\psi-\psi')(\n_{d,t})\subseteq Ker(B_3)$ and
$$
\psi\varphi-\psi'\varphi'=\psi(\varphi-\varphi')-(\psi'-\psi)\varphi',
$$we get $(\psi\varphi-\psi'\varphi')(\n_{d,t})\subseteq Ker(B_3)$. Hence, $[\psi\varphi]=[\psi'\varphi']$
 
Now we define the functor
\begin{equation}\label{functor}
\bf {Q_{d,t}} \colon{ \bf Sym_0(d,t)} \to {\bf NilpQuad_{d,t}}
\end{equation}
such that for every $B\in  Obj ({\bf Sym_0(d,t)})$
\begin{equation*}
{\bf Q_{d,t}}(B) = (\n_{d,t}/Ker (B), \overline {B}),
\end{equation*}
where  $\overline {B} \colon \n_{d,t}/Ker(B)\times \n_{d,t}/Ker(B) \to K$ is defined by $\overline{B} (x+Ker(B), y+Ker(B))= B(x,y)$. And, for every morphism $[\varphi] \in Hom (B_1, B_2)$,
 \begin{eqnarray}
 \begin{split}
 {\bf Q_{d,t}} ([\varphi])   \colon & (\n_{d,t}/Ker(B_1), \overline {B_1})  & \to &\quad  (\n_{d,t}/ Ker (B_2), \overline {B_2})\\ & \quad x+ Ker (B_1) & \mapsto & \quad \varphi (x) + Ker(B_2).
 \end{split}
 \end{eqnarray}
 We remark that $\bf Q_{d,t}$ is a well-defined functor for objects, because of Proposition \ref{previosafunctor}. Also $\bf Q_{d,t}$ is well-defined for morphisms. If $\varphi, \psi \in MEnd_\perp (B_1, B_2)$ such that $[\varphi] = [\psi]$, then 
$${\bf Q_{d,t}}([\varphi])(x + Ker(B_1))=\varphi (x) + Ker (B_2)$$
 and 
 $${\bf Q_{d,t}}([\psi])(x + Ker(B_1))=\psi (x) + Ker (B_2)$$
 for every $x\in \n_{d,t}.$
  But, from (\ref{equivalencia}), $(\varphi - \psi)(\n_{d,t})\subseteq Ker (B_2)$, and therefore,  ${\bf Q_{d,t}}([\varphi])(x + Ker(B_1))= {\bf Q_{d,t}}([\psi])(x + Ker(B_1))$.
\medskip

The following theorem stablishes that the functor $\bf Q_{d,t}$ defined in (\ref{functor}) provides an equivalence between both categories, $\bf Sym_0(d,t)$ and $\bf NilpQuad_{d,t}$.

\begin{theo}\label{eqcategorias}
The categories $\bf Sym _0(d,t)$ and $\bf NilpQuad_{d,t}$ are equivalent.
\end{theo}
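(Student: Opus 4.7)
The plan is to verify that the functor $\mathbf{Q}_{d,t}$ is essentially surjective and fully faithful; by standard category theory this is equivalent to the two categories being equivalent. All three verifications will be short consequences of Proposition \ref{previosafunctor} together with the universal property of the free nilpotent Lie algebra $\n_{d,t}$.

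For essential surjectivity, I would take an arbitrary object $(\n,B)\in\mathbf{NilpQuad}_{d,t}$. As recalled at the beginning of Section 4.1, there exists an isomorphism $\varphi\colon\n_{d,t}/I\to\n$ with $\n_{d,t}^t\not\subseteq I\subseteq\n_{d,t}^2$. Define $B_1$ on $\n_{d,t}$ by pulling $B$ back through $\varphi$, as in (\ref{invarianteinducida}). Proposition \ref{previosafunctor} (parts (i)--(iii)) then gives $B_1\in\mathrm{Obj}(\mathbf{Sym}_0(d,t))$ with $\mathrm{Ker}(B_1)=I$, and part (iv) exhibits $\varphi$ as an isometric isomorphism $\mathbf{Q}_{d,t}(B_1)=(\n_{d,t}/I,\overline{B_1})\to(\n,B)$. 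Hence every object of $\mathbf{NilpQuad}_{d,t}$ is isomorphic to one in the image of $\mathbf{Q}_{d,t}$.

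Faithfulness is immediate from the way morphisms in $\mathbf{Sym}_0(d,t)$ were defined. If $\mathbf{Q}_{d,t}([\varphi_1])=\mathbf{Q}_{d,t}([\varphi_2])$, then $\varphi_1(x)+\mathrm{Ker}(B_2)=\varphi_2(x)+\mathrm{Ker}(B_2)$ for every $x\in\n_{d,t}$, so $(\varphi_1-\varphi_2)(\n_{d,t})\subseteq\mathrm{Ker}(B_2)$, which is precisely the relation (\ref{equivalencia}); thus $[\varphi_1]=[\varphi_2]$. For fullness, given a metric Lie homomorphism $\psi\colon(\n_{d,t}/\mathrm{Ker}(B_1),\overline{B_1})\to(\n_{d,t}/\mathrm{Ker}(B_2),\overline{B_2})$, I choose for each generator $x_i$ of $\n_{d,t}$ any lift $y_i\in\n_{d,t}$ with $y_i+\mathrm{Ker}(B_2)=\psi(x_i+\mathrm{Ker}(B_1))$. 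By the Sato-style extension result recalled in Section 3, the assignment $x_i\mapsto y_i$ extends to a unique Lie endomorphism $\varphi$ of $\n_{d,t}$. A straightforward induction on the length of Hall monomials shows $\varphi(x)+\mathrm{Ker}(B_2)=\psi(x+\mathrm{Ker}(B_1))$ for every $x\in\n_{d,t}$; in particular $x\in\mathrm{Ker}(B_1)$ forces $\varphi(x)\in\mathrm{Ker}(B_2)$. Because $\psi$ is metric, the relation
\[
B_2(\varphi(x),\varphi(y))=\overline{B_2}(\psi(x+\mathrm{Ker}(B_1)),\psi(y+\mathrm{Ker}(B_1)))=\overline{B_1}(x+\mathrm{Ker}(B_1),y+\mathrm{Ker}(B_1))=B_1(x,y)
\]
holds for all $x,y\in\n_{d,t}$. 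Hence $\varphi\in MEnd_\perp(B_1,B_2)$ and $\mathbf{Q}_{d,t}([\varphi])=\psi$, proving fullness.

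The only nonroutine point is the fullness argument, where the crucial ingredient is the universal property of $\n_{d,t}$: without the ability to lift a Lie homomorphism $\psi$ on the quotient to a genuine Lie endomorphism of $\n_{d,t}$ by freely prescribing the images of the generators, one could not produce an element of $MEnd_\perp(B_1,B_2)$. Everything else reduces to Proposition \ref{previosafunctor} and to unwinding the definition of the equivalence relation (\ref{equivalencia}).
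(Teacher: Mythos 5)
Your proposal is correct and follows essentially the same route as the paper's proof: essential surjectivity from Proposition \ref{previosafunctor}(iv), faithfulness by unwinding the relation (\ref{equivalencia}), and fullness by lifting the images of the generators and invoking the universal mapping property of $\n_{d,t}$ (the paper phrases your ``induction on Hall monomials'' as the identity $\tau\pi_1=\pi_2\varphi$ of two Lie homomorphisms agreeing on generators). No substantive differences.
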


\begin{proof}
We will show that the functor $\bf Q_{d,t}$ is faithful and full, and that for every object $(\n, B)$ in $\bf NilpQuad_{d,t}$ there exists an object $U$ in $\bf Sym_0(d,t)$ such that $ \bf Q_{d,t} (U)$ and $\bf (\n, B)$ are isomorphic in  $\bf NilpQuad_{d,t}$ (that is, the functor is essentially surjective or dense). So, according to \cite [Proposition 1.3]{J}, we will have that both categories are equivalent.

From Proposition \ref{previosafunctor} (iv), we get that $\bf Q_{d,t}$ is dense. Now we prove that $\bf Q_{d,t}$ is a faithful functor. Indeed, if ${\bf Q_{d,t}}([\varphi]) = {\bf Q_{d,t}}([\psi])$, with $[\varphi], [\psi] \in Hom (B_1, B_2)$ and $B_1, B_2 \in Obj ({\bf Sym_0 (d,t)})$ then $\varphi(x) + Ker (B_2)= \psi (x) + Ker (B_2) $ for all $x\in \n_{d,t}$. That is,  $(\varphi - \psi)(\n_{d,t})\subseteq Ker(B_2)$ and therefore $ [\varphi]=[\psi]$ by using (\ref{equivalencia}).

Finally we check that $\bf Q_{d,t}$ is a full functor. Let $\tau$ be any element in the set of morphisms $Hom ({\bf Q_{d,t}}(B_1), {\bf Q_{d,t}}(B_2))$. For a fixed $\{x_1, \dots, x_d\}$ set of generators of $\n_{d,t}$, take $y_i \in \n_{d,t}$ such that $y_i + Ker(B_2)= \tau(x_i + Ker (B_1))$  for $i= 1, \dots, d$. From the universal mapping property of $\n_{d,t}$  the correspondence $x_i\to y_i$ extends uniquely to a Lie homormorphism $\varphi \colon \n_{d,t} \to \n_{d,t}$. Since $\varphi(x_i) + Ker (B_2)= y_i + Ker (B_2)= \tau (x_i + Ker (B_1))$ and since the elements $x_i$ generate $\n_{d,t}$, we have $\tau\pi_1=\pi_2 \varphi$ where $\pi_i:\n_{d,t}\to \n_{d,t}/Ker(B_i)$ is the canonical surjection. This implies  $\varphi(Ker (B_1) )\subseteq Ker (B_2)$ and $\varphi(x) + Ker (B_2) = \tau (x + Ker (B_1))$, for all  $x \in \n_{d,t}$.  Moreover, $\varphi$ is a metric endomorphism:
 \begin{eqnarray*}
 B_2(\varphi(x), \varphi(y)) &=& \overline {B_2}(\varphi (x) + Ker (B_2), \varphi (y) + Ker (B_2))= \\ & & \overline {B_2} (\tau (x + Ker (B_1), \tau (y + Ker (B_1))=\\
 & & \overline {B_1} ( x + Ker (B_1), y + Ker (B_1))= B_1(x,y).
 \end{eqnarray*}
Therefore $[\varphi] \in Hom (B_1, B_2)$ and ${\bf Q_{d,t}} ([\varphi]) = \tau$.% because
%$$ {\bf Q_{d,t}} ([\varphi])(x+Ker (B_1)) = \varphi (x)+ Ker (B_2)= \tau (x+ Ker (B_1))$$
\end{proof}

As a consequence of this equivalence of categories we have:
\begin{coro}\label{isomorfismo}
For all $B_1, B_2 \in Obj ({\bf Sym _0(d,t)})$, the following assertions are equivalent:
\begin{enumerate}
\item[{\rm (i)}] $B_1$ and  $ B_2$ are isomorphic in ${\bf Sym _0(d,t)}$.
\item[{\rm (ii)}] ${\bf Q_{d,t}}(B_1)$ and ${\bf Q_{d,t}}(B_2)$ are isometrically isomorphic Lie algebras.
\item[{\rm (iii)}] There exists a metric automorphism $\theta:(\n_{d,t},B_1)\to (\n_{d,t},B_2)$.
\end{enumerate}
\end{coro}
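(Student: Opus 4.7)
The plan is to deduce (i)$\Leftrightarrow$(ii) formally from Theorem \ref{eqcategorias}, and then treat (i)$\Leftrightarrow$(iii) separately; only the last step carries real content. I would prove the implications in the order (i)$\Leftrightarrow$(ii), (iii)$\Rightarrow$(i), and finally (i)$\Rightarrow$(iii).

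For (i)$\Leftrightarrow$(ii) I would invoke the general categorical fact that an equivalence of categories both preserves and reflects isomorphisms. Functoriality of $\mathbf{Q}_{d,t}$ gives (i)$\Rightarrow$(ii) immediately. Conversely, an isometric isomorphism $\sigma\colon \mathbf{Q}_{d,t}(B_1)\to\mathbf{Q}_{d,t}(B_2)$ lifts through fullness of $\mathbf{Q}_{d,t}$ (established in Theorem \ref{eqcategorias}) to classes $[\varphi],[\psi]$ with $\mathbf{Q}_{d,t}([\varphi])=\sigma$ and $\mathbf{Q}_{d,t}([\psi])=\sigma^{-1}$; faithfulness then forces $[\psi\varphi]=[\mathrm{id}]=[\varphi\psi]$, so $[\varphi]$ is an isomorphism in $\mathbf{Sym}_0(d,t)$.

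For (iii)$\Rightarrow$(i) the main check is that a metric automorphism $\theta$ automatically satisfies $\theta(\mathrm{Ker}(B_1))=\mathrm{Ker}(B_2)$: for $x\in\mathrm{Ker}(B_1)$ and arbitrary $y'=\theta(y)$ one has $B_2(\theta(x),y')=B_1(x,y)=0$, using surjectivity of $\theta$. The inverse $\theta^{-1}$ is a metric automorphism from $B_2$ to $B_1$ by the same calculation, so $[\theta]$ and $[\theta^{-1}]$ are mutually inverse morphisms in $\mathbf{Sym}_0(d,t)$.

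The substantive step is (i)$\Rightarrow$(iii). Given mutually inverse classes $[\varphi]\in \mathrm{Hom}(B_1,B_2)$ and $[\psi]\in \mathrm{Hom}(B_2,B_1)$, I would argue that the representative $\varphi$ is itself a metric automorphism of $\n_{d,t}$. From $[\psi\varphi]=[\mathrm{id}]$ and the relation \eqref{equivalencia} we have $(\psi\varphi-\mathrm{id})(\n_{d,t})\subseteq\mathrm{Ker}(B_1)\subseteq\n_{d,t}^2$, so $\psi\varphi(x)\equiv x\pmod{\n_{d,t}^2}$ for every $x$. Because $\varphi$ and $\psi$ are Lie endomorphisms they preserve $\n_{d,t}^2$, so applying the projection $e_1$ and restricting to $\mathfrak{s}_1$ collapses the congruence to $(e_1\psi|_{\mathfrak{s}_1})\circ(e_1\varphi|_{\mathfrak{s}_1})=\mathrm{id}_{\mathfrak{s}_1}$; the symmetric argument gives the other composition. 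Hence $e_1\circ\varphi|_{\mathfrak{s}_1}\in GL(\mathfrak{s}_1)$, and by Sato's criterion recalled just before Proposition \ref{productosemidirecto} the Lie endomorphism $\varphi$ is an automorphism of $\n_{d,t}$, which is metric by hypothesis. The main (though mild) obstacle is exactly this lifting: one must see that the equivalence relation $\sim$ quotients out precisely the ambiguity that could obstruct a morphism-class from being represented by an honest Lie automorphism, and that the preservation of $\n_{d,t}^2$ by Lie endomorphisms is what lets the reduction modulo $\n_{d,t}^2$ be read off as invertibility on $\mathfrak{s}_1$.
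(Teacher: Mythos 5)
Your proposal is correct, and the two directions you treat formally, (i)$\Leftrightarrow$(ii), coincide with what the paper does: it simply invokes Theorem \ref{eqcategorias}. Where you genuinely diverge is in the third equivalence. The paper handles (ii)$\Leftrightarrow$(iii) by citing Proposition 1.6 of Gauger \cite{G} ``and the proof therein,'' i.e.\ it outsources the lifting of an isometric isomorphism of quotients to an automorphism of $\n_{d,t}$. You instead prove (i)$\Rightarrow$(iii) directly and internally: from $[\psi\varphi]=[\mathrm{id}]$ and $\mathrm{Ker}(B_1)\subseteq\n_{d,t}^2$ you get $\psi\varphi\equiv\mathrm{id}\pmod{\n_{d,t}^2}$, and since Lie endomorphisms preserve $\n_{d,t}^2$ this descends to $(e_1\psi|_{\mathfrak{s}_1})(e_1\varphi|_{\mathfrak{s}_1})=\mathrm{id}_{\mathfrak{s}_1}$, whence $e_1\circ\varphi|_{\mathfrak{s}_1}\in GL(\mathfrak{s}_1)$ and Sato's criterion (as recalled before Proposition \ref{productosemidirecto}) makes $\varphi$ itself a metric automorphism. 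This is sound: every Lie endomorphism of $\n_{d,t}$ is of the form $\Phi_{\varphi|_{\mathfrak{s}_1}}$ by the universal property, finite-dimensionality of $\mathfrak{s}_1$ turns the one-sided inverse into a genuine one, and your (iii)$\Rightarrow$(i) check that a metric automorphism carries $\mathrm{Ker}(B_1)$ onto $\mathrm{Ker}(B_2)$ is exactly what is needed for $[\theta]$ and $[\theta^{-1}]$ to be mutually inverse morphisms. What your route buys is self-containedness (no appeal to \cite{G}) and the useful explicit observation that every isomorphism class in $\mathbf{Sym}_0(d,t)$ is represented by an honest automorphism of $\n_{d,t}$ --- which is precisely the fact Lemma \ref{isomorfismos} later relies on; what the paper's route buys is brevity.
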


\begin{proof} The equivalence ${\rm (i)}\Longleftrightarrow{\rm (ii)}$ follows from Theorem \ref{eqcategorias} and ${\rm (ii)}\Longleftrightarrow{\rm (iii)}$ follows from Proposition 1.6 in \cite{G} and the proof therein.
\end{proof}

Since ${\bf Sym_0(d,t)}$ and ${\bf NilpQuad}_{d,t}$ are equivalent categories, there is a bijection between the isomorphism types of objects of each category. So,  the classification of t-nilpotent quadratic Lie algebras with $d$  generators up to isometric isomorphisms is the classification of objects  in the category ${\bf NilpQuad}_{d,t}$ up to isomorphism, and this classification, in turn, is the one of objects in the category $\bf Sym_0(d,t)$ up to isomorphism.
\medskip

\noindent 4.2. {\bf The group $\text{Aut}\, \n_{d,t}$ acting on ${\bf Sym_0(d,t)}$.} The group of automorphisms of $\n_{d,t}$ acts on the set $Obj ({\bf Sym _0(d,t)})$ in the natural way:
\begin{equation}\label{accion}
\mathrm{Aut}\, \n_{d,t}\times Obj ({\bf Sym _0(d,t)})\to Obj ({\bf Sym _0(d,t)}), (\theta, B)\to B_\theta
\end{equation}
where $B_\theta (x,y)=B(\theta(x),\theta(y))$. Indeed, for every $\theta\in Aut\, \n_{d,t}$, $\theta: (\n_{d,t}, B_\theta)\to (\n_{d,t}, B)$ is a metric Lie isomorphism. Even more:

\begin{lem}\label{isomorfismos} For all $B\in Obj ({\bf Sym _0(d,t)})$, the set $Orb_{\mathrm{Aut}\, \n_{d,t}}(B)=\{B_\theta: \theta\in \mathrm{Aut}\, \n_{d,t}\}$ is equal to the set of bilinear invariant symmetric forms isomorphic to $B$ in the category ${\bf Sym _0(d,t)}$. Therefore the number of orbits of the action described in \emph{(\ref{accion})} is exactly the number of isomorphism types in the classification of $t$-nilpotent quadratic Lie algebras of type $d$ up to isometries.
\end{lem}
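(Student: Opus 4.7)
The plan is to deduce this lemma from Corollary \ref{isomorfismo}, which already equates ``isomorphic in $\mathbf{Sym}_0(d,t)$'' with ``linked by a metric automorphism of $\n_{d,t}$''. All that remains is to match this condition with ``lying in the same $\mathrm{Aut}\,\n_{d,t}$-orbit'' and then read off the corollary about counting.

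First I would check the containment $\mathrm{Orb}_{\mathrm{Aut}\,\n_{d,t}}(B)\subseteq \{B'\in \mathrm{Obj}(\mathbf{Sym}_0(d,t)) : B'\cong B\}$. Take $\theta\in \mathrm{Aut}\,\n_{d,t}$; I need first to argue $B_\theta$ is indeed an object of $\mathbf{Sym}_0(d,t)$, which amounts to checking $\mathrm{Ker}(B_\theta)=\theta^{-1}(\mathrm{Ker}(B))$ (immediate from bijectivity of $\theta$ and the identity $B_\theta(x,y)=B(\theta x,\theta y)$) and then transporting the containments $\n_{d,t}^t\not\subseteq \mathrm{Ker}(B)\subseteq \n_{d,t}^2$ through $\theta^{-1}$, which preserves both $\n_{d,t}^t$ and $\n_{d,t}^2$ because $\theta^{-1}$ is a Lie automorphism. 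The map $\theta:(\n_{d,t},B_\theta)\to(\n_{d,t},B)$ is then a metric Lie automorphism by definition, and it sends $\mathrm{Ker}(B_\theta)$ onto $\mathrm{Ker}(B)$, so $[\theta]\in \mathrm{Hom}(B_\theta,B)$ with inverse $[\theta^{-1}]$; equivalently, I invoke Corollary \ref{isomorfismo}, implication (iii)$\Rightarrow$(i), to conclude $B_\theta\cong B$ in $\mathbf{Sym}_0(d,t)$.

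For the reverse containment, suppose $B'\cong B$ in $\mathbf{Sym}_0(d,t)$. Corollary \ref{isomorfismo}, implication (i)$\Rightarrow$(iii), hands me a metric automorphism $\theta:(\n_{d,t},B)\to(\n_{d,t},B')$, i.e.\ $\theta\in \mathrm{Aut}\,\n_{d,t}$ with $B(x,y)=B'(\theta x,\theta y)=B'_\theta(x,y)$. Hence $B=B'_\theta$. Using the elementary identity $(B'_\theta)_{\theta^{-1}}=B'_{\theta\theta^{-1}}=B'$ (which I would verify with one line from the definition), applying $\theta^{-1}$ to the action gives $B'=B_{\theta^{-1}}\in \mathrm{Orb}_{\mathrm{Aut}\,\n_{d,t}}(B)$. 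This establishes the two directions and hence the orbit description.

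For the last sentence, the orbit partition of $\mathrm{Obj}(\mathbf{Sym}_0(d,t))$ coincides with the partition into isomorphism classes in the category, by the first part. Under the functor $\mathbf{Q}_{d,t}$, which by Theorem \ref{eqcategorias} is an equivalence of categories, isomorphism classes in $\mathbf{Sym}_0(d,t)$ correspond bijectively to isomorphism classes in $\mathbf{NilpQuad}_{d,t}$, which are precisely the isometric isomorphism classes of $t$-nilpotent quadratic Lie algebras of type $d$. Counting orbits therefore counts these isometry types. The whole proof is essentially bookkeeping on top of Corollary \ref{isomorfismo}; the only genuine verification is the straightforward kernel-preservation check together with the composition rule $(B_\theta)_{\theta'}=B_{\theta\theta'}$ for the action, neither of which poses any real obstacle.
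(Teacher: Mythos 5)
Your proposal is correct and follows essentially the same route as the paper: both directions of the orbit description are reduced to Corollary \ref{isomorfismo} (a metric automorphism $\theta$ realizes the isomorphism, and $B'=B_\theta$ reads off the orbit membership), and the counting statement then follows from the categorical equivalence of Theorem \ref{eqcategorias}. The only difference is that you spell out the verification that $B_\theta$ lies in $Obj({\bf Sym_0(d,t)})$ (via $\mathrm{Ker}(B_\theta)=\theta^{-1}(\mathrm{Ker}(B))$ and invariance of $\n_{d,t}^2$, $\n_{d,t}^t$ under automorphisms), which the paper leaves as ``clear''.
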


\begin{proof}
For the first assertion, we remember that $B$ and $B'$ are isomorphic objects in ${\bf Sym _0(d,t)}$ if and only if there exist $\varphi\in MEnd_\perp (B, B')$ and $\tau \in MEnd_\perp (B', B)$ such that $[\varphi \tau]=[\tau\varphi]=[\text{I}_{\n_{d,t}}]$. It is clear that $B$ is isomorphic to $B_\theta$ for every $\theta\in \text{Aut}\, \n_{n,t}$. Conversely, if $B'$ is isomorphic to $B$, from Corollary \ref{isomorfismo}, there exists a metric automorphism $\theta:(\n_{d,t}, B')\to(\n_{d,t}, B)$. So, $B(\theta(x),\theta(x))=B'(x,y)$ and therefore, $B'=B_\theta\in Orb_{\mathrm{Aut}\, \n_{d,t}}(B)$.
\end{proof}

%We are going to extend the action of $\text{Aut}\,\n_{d,t}$, on the $\mathbb{K}$-vector space,
From now on, we will denote as $S_0^2(d,t)$ the $\mathbb{K}$-vector space
\begin{equation}\label{S02}
S_0^2(d,t)=\{ B  : B \text { is an invariant symmetric bilinear form on } \n_{d,t}\}.
\end{equation}
{\noindent We note that the elements in $Obj(\bf Sym_0(d,t))$ are not closed by the sum of bilinear forms. If  $B_1, B_2 \in Obj(\bf Sym_0(d,t))$, it  can happen that $Ker (B_1 + B_2)\nsubseteq \n_{d,t}^2$, and also that $\n_{d,t}^t\subseteq Ker(B_1+B_2)$. On these occasions we can see $B_1 + B_2$ as a invariant bilinear form on $\n_{d_1,t}$ with $1\leq d_1\leq d$, or on $\n_{d,t_1}$, with $1\leq t_1\leq  t$, respectively. 

\noindent Also we notice that  it could be interesting to consider the more general action:
 \begin{eqnarray*}
 \text{Aut}\,\n_{d,t} \times S_0^2(d,t)  &\rightarrow & S_0^2(d,t)\\
 (\theta, B) & \mapsto & B_\theta
 \end{eqnarray*}
We observe that if $B\in S_0^2(d,t)$,  and we define
\begin{eqnarray}
\begin{split}
 B(e_i,e_j): \n_{d,t} \times \n_{d,t}  &\rightarrow & \mathbb{K} \qquad \qquad\\
 (x,y) & \mapsto & B(e_i(x), e_j(y))
 \end{split}
 \end{eqnarray}
then
$$B= \sum_{i,j=1}^t B(e_i, e_j),$$
where $e_k$ are the projections defined in (\ref{k-proyecciones}). We also observe that  $B(e_i, e_j)=0$ if $i+j>t+1$. Defining for $k=1, \dots, t$,
\begin{eqnarray}\label{Bk-componentes}
B_k= \sum_{i=1}^{t-k+1} B(e_i, e_{t-i-k+2}),
 \end{eqnarray}
we have the decomposition
$$B= \sum_{k=1}^t B_k,$$
where $B_k$ for $k=1, \dots, t$ are invariant symmetric bilinear forms, because:
\begin{eqnarray*}
B_k(a,[b,c])&=& \sum_{i=1}^{t-k+1} B(e_i(a), e_{t-i-k+2}([b,c]))= \\& &\sum_{i=1}^{t-k+1} B(e_i(a), \sum_{l+m=t-i-k+2}[e_l(b),e_m(c)]) =\\& &\sum_{i=1}^{t-k+1} \sum_{l+m=t-i-k+2}B(e_i(a), [e_l(b),e_m(c)]) =\\& & \sum_{m=1}^{t-k+1} \sum_{l+i=t-m-k+2} B([e_i(a), e_l(b)], e_m(c)) = \\& & \sum_{m=1}^{t-k+1} B(e_{t-m-k+2}([a,b]), e_m(c)) = B_k ([a,b],c)
\end{eqnarray*}
\begin{ex}
In \emph{Example \ref{formasinvariantespequenas}}, the sum decomposition $B_{2,3}^{A;\gamma}= B_1+B_2+B_3$ will be (matrices are given in the Hall basis $\mathcal {H}_{2,3}$)
$$\left( \begin{array}{cc|c|cc} 0 &0 & 0 & 0 & \gamma \\ 0& 0 & 0 & -\gamma & 0 \\ \hline 0 & 0 & \gamma & 0 & 0 \\ \hline 0 & -\gamma & 0 & 0 & 0 \\ \gamma & 0 & 0 & 0 & 0 \end{array}\right) + \left( \begin{array} {cc|c|cc}0 & 0 & 0 & 0 & 0 \\0 & 0& 0 & 0 & 0 \\ \hline 0 & 0 & 0 & 0 & 0 \\ \hline 0 & 0 & 0 & 0 & 0 \\ 0 & 0 & 0 & 0 & 0 \end{array}\right) + \left( \begin{array}{cc|c|cc} \alpha & \beta & 0 & 0 & 0 \\ \beta & \delta & 0 & 0 & 0 \\ \hline 0 & 0 & 0 & 0 & 0 \\ \hline 0 & 0& 0 & 0 & 0 \\ 0 & 0 & 0 & 0 & 0 \end{array}\right).$$
\end{ex}

The $k$-components related to invariant bilinear symmetric that we have introduced in (\ref{Bk-componentes}) have special features: 
%every component $B_k$ of the bilinear , $B(e_i, e_{t-i+k+2})$ for a fixed $i$ determines the $B(e_i, e_{t-j+k+2})$ for $j= 1, \dots, t-k+1$, because of the invariance of $B$. So, we have the following result:

\begin{pro}\label{Bk}
For any  $B\in S_0^2(d,t)$, the following assertions holds: 
\begin{enumerate}
\item[{\rm (i)}]$B_k= \sum_{i=1}^{t-k+1} B(e_i, e_{t-i-k+2})$ for $k=1, \dots, t$ are invariant symmetric bilinear forms.
\item[{\rm (ii)}]In the sum $ B_k= \sum_{i=1}^{t-k+1} B(e_i, e_{t-i-k+2})$, any fixed addend $B(e_i, e_{t-i-k+2})$, determines completely the whole set of  addends. 
\item[{\rm (iii)}]If $B\in Obj({\bf Sym_0(d,t)})$ then $B_1\not=0$. In fact, $B(e_i,e_j)\not=0$ for $i+j= t+1$.
\end{enumerate}
\end{pro}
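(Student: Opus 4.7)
The plan is to handle the three parts in order, leaning heavily on the invariance computation already displayed immediately before the statement.

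For (i), the invariance of each $B_k$ is literally what that preceding chain of equalities establishes, via the invariance of $B$ combined with the graded identity $[\mathfrak{s}_l,\mathfrak{s}_m] \subseteq \mathfrak{s}_{l+m}$. Only symmetry remains to be checked. Reindexing by $j = t+2-k-i$ in the sum defining $B_k$, the pairs $(i, t+2-k-i)$ are merely permuted, so the symmetry of $B$ passes to $B_k$: $B_k(y,x) = \sum_i B(e_{t+2-k-i}(x), e_i(y)) = B_k(x,y)$.

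For (ii), the crucial ingredient is the graded generation $\mathfrak{s}_{i+1} = [\mathfrak{s}_i, \mathfrak{s}_1]$ together with invariance. For $a \in \mathfrak{s}_i$, $x \in \mathfrak{s}_1$, $b \in \mathfrak{s}_{j-1}$,
\[
B(e_{i+1}, e_{j-1})([a,x], b) \;=\; B([a,x], b) \;=\; B(a, [x,b]) \;=\; B(e_i, e_j)(a, [x,b]),
\]
with $[x,b] \in \mathfrak{s}_j$. Because $B(e_i, e_j)$ vanishes off $\mathfrak{s}_i \times \mathfrak{s}_j$, this identity reconstructs $B(e_{i+1}, e_{j-1})$ from $B(e_i, e_j)$; running the same formula in reverse recovers $B(e_i, e_j)$ from $B(e_{i+1}, e_{j-1})$. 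Sliding this shift along the line $i + j = t + 2 - k$ then shows that any single addend of $B_k$ determines all the others.

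For (iii), the hypothesis $\n_{d,t}^t = \mathfrak{s}_t \nsubseteq Ker(B)$ provides $x \in \mathfrak{s}_t$ and $y \in \n_{d,t}$ with $B(x,y) \neq 0$. Decompose $y = \sum_j e_j(y)$. For each $j \geq 2$, iterated use of invariance transfers brackets from the second slot to the first, eventually reducing the expression to $B(\mathfrak{s}_{t+j-1}, \mathfrak{s}_1)$, which vanishes because $\mathfrak{s}_{t+j-1} = 0$. Hence $B(x,y) = B(x, e_1(y))$, so $B(e_t, e_1) \neq 0$, which already gives $B_1 \neq 0$. Applying (ii) to the line $i+j = t+1$, the nonvanishing of this single addend forces $B(e_i, e_j) \neq 0$ for every pair with $i+j = t+1$.

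The main subtlety will be the bookkeeping in (ii): one must verify that the shift $B(e_i,e_j) \leftrightarrow B(e_{i+1}, e_{j-1})$ is genuinely bidirectional and sweeps through every admissible pair on the line $i+j = t+2-k$, without running out of range. The annihilation step used in (iii) is a routine iteration of the same invariance identity and should present no real difficulty.
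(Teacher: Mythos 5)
Your proposal is correct and follows essentially the same route as the paper: (i) is the displayed invariance computation preceding the statement, (ii) rests on invariance together with $\mathfrak{s}_{j}=[\mathfrak{s}_{j-1},\mathfrak{s}_1]$, and (iii) reduces to the observation that $B(e_i,e_t)=0$ for $i\geq 2$, forcing $B(e_1,e_t)\neq 0$ lest $\mathfrak{s}_t\subseteq Ker(B)$. The paper merely asserts (ii) as ``a consequence of the invariance of $B$''; your bidirectional shift along the line $i+j=t+2-k$ is exactly the argument it has in mind, spelled out.
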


\begin{proof}
Item (i) has been already proved and item (ii) is a consequence of the invariance of $B$. To show (iii), we take into account that $\n_{d,t}^t \nsubseteq Ker(B)$ if $B\in Obj({\bf Sym_0(d,t)})$. So, as $B(e_i,e_t)=0$ for $i=2, \dots, t$, $B(e_1, e_t)\not =0$; otherwise, ${\mathfrak s}_t=\n_{d,t}^t\subseteq Ker(B)$, a contradiction. So $B_1\not=0$, and from (ii) we get $B(e_i,e_j)\not=0$ for $i,j =1, \dots, t$ such that $i+j=t+1$.
\end{proof}

Next, we consider the set of $B_1$-components of every invariant symmetric bilinear form $B$ in $S_0^2(d,t)$, that is:
$$S_{00}^2 (d,t)= \{ \sum_{i=1}^t B(e_i, e_{t-i+1}) : B \in S_0^2(d,t)\}.$$
We note that $S_{00}^2(d,t)$ is a $\mathbb{K}$-vector subspace of invariant symmetric bilinear forms of $S_0^2(d,t)$. Moreover, if $B\in S_{00}^2(d,t)$ and $\varphi \in H(d,t)$, from (\ref{H(d,t)}) we have
\begin{eqnarray*}
B_{\varphi} &=&\sum_{i=1}^t B_{\varphi}(e_i, e_{t-i+1})= \sum_{i=1}^t B(e_i \sum_{j=1}^t e_j\varphi e_j, e_{t-i+1} \sum_{j=1}^t e_j\varphi e_j) =\\& & \sum_{i=1}^t B(e_i\varphi e_i, e_{t-i+1}\varphi e_{t-i+1})\in S_{00}^2(d,t)
\end{eqnarray*}
So, we can conclude that $S_{00}^2(d,t)$ is a $H(d,t)$-module. In the case that $\varphi \in N(d,t)$, using (\ref{N(d,t)}) we get
\begin{eqnarray*}
B_{\varphi} &=&\sum_{i=1}^t B_{\varphi}(e_i, e_{t-i+1})= \sum_{i=1}^t B(e_i (Id+ \sum_{j>k}e_j\varphi e_k), e_{t-i+1} (Id+\sum_{j>k} e_j\varphi e_k)) =\\& &   \sum_{i=1}^t B(\sum_{i>k}e_i\varphi e_k, e_{t-i+1}) + \sum_{i=1}^t B(e_i, \sum_{t-i+1>k} e_{t-i+1}\varphi e_k) + \\ & & \sum_{i=1}^t B(\sum_{i>k}e_i\varphi e_k, \sum_{t-i+1>k} e_{t-i+1}\varphi e_k) +B
\end{eqnarray*}
\noindent  and therefore $(B_\varphi)_1=B$. That is, the action on $B\in S_{00}^2(d,t)$ of the elements of $N(d,t)$ provides invariant bilinear forms that have $B$ as 1-component. Summarizing:

\begin{lemma}
For any $B\in S_{00}^2(d,t)$, $B_\varphi \in S_{00}^2(d,t)$ for every $\varphi \in H(d,t)$ and if $\varphi \in N(d,t)$ then $(B_\varphi)_1=B$.
\end{lemma}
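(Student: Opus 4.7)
The plan is to exploit a support property enjoyed by every element of $S_{00}^2(d,t)$ together with the structural descriptions of $H(d,t)$ and $N(d,t)$ recalled in \eqref{H(d,t)} and \eqref{N(d,t)}. First I would establish the preliminary observation that if $B \in S_{00}^2(d,t)$, then $B(u,v) = 0$ whenever $u \in \mathfrak{s}_k$ and $v \in \mathfrak{s}_l$ with $k + l \neq t + 1$. Indeed, by definition $B = \sum_i B'(e_i, e_{t-i+1})$ for some $B' \in S_0^2(d,t)$, and the orthogonality $e_i e_k = \delta_{ik} e_k$ forces $B(e_k, e_l) = 0$ for $k + l \neq t + 1$. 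In particular $B = \sum_k B(e_k, e_{t-k+1})$ equals its own $B_1$-component.

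For the first assertion, take $\varphi \in H(d,t)$. By \eqref{H(d,t)}, $\varphi = \sum_{j=1}^t e_j \varphi e_j$, so $\varphi e_i = e_i \varphi e_i$ sends $\n_{d,t}$ into $\mathfrak{s}_i$. Expanding
\begin{equation*}
B_\varphi(x,y) = B(\varphi x, \varphi y) = \sum_{k,l} B(\varphi e_k(x), \varphi e_l(y)),
\end{equation*}
the support property kills every summand with $k + l \neq t + 1$, leaving
\begin{equation*}
B_\varphi(x,y) = \sum_k B(\varphi e_k(x), \varphi e_{t-k+1}(y)) = \sum_k B_\varphi(e_k, e_{t-k+1})(x,y),
\end{equation*}
which exhibits $B_\varphi$ as its own $B_1$-component and places it in $S_{00}^2(d,t)$.

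For the second assertion, take $\varphi \in N(d,t)$. By \eqref{N(d,t)}, $\varphi e_i = e_i + \psi_i$ where $\psi_i = \sum_{j>i} e_j \varphi e_i$ sends $\n_{d,t}$ into $\bigoplus_{j>i} \mathfrak{s}_j$. Expanding $B(\varphi e_i(x), \varphi e_{t-i+1}(y))$ into four pieces (the principal term plus two cross terms plus one higher--higher term), in each non-principal piece at least one factor lies strictly above $\mathfrak{s}_i$ or $\mathfrak{s}_{t-i+1}$, so the two arguments sit in $\mathfrak{s}_a \times \mathfrak{s}_b$ with $a + b > t + 1$; the support property annihilates each such piece. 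Summing over $i$ yields $(B_\varphi)_1 = \sum_i B_\varphi(e_i, e_{t-i+1}) = \sum_i B(e_i, e_{t-i+1}) = B$.

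There is no serious obstacle: the whole argument is a careful bookkeeping of degrees against the weight-$(t+1)$ support of $B$. The only point requiring vigilance is verifying the strict inequality $a + b > t + 1$ in every non-principal piece of the $N(d,t)$ calculation, which follows immediately from the relations $j > i$ built into the definition of $\psi_i$.
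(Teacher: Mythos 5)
Your proof is correct and follows essentially the same route as the paper: both arguments expand $\varphi$ through the projection descriptions \eqref{H(d,t)} and \eqref{N(d,t)} and exploit the fact that an element of $S_{00}^2(d,t)$ is supported in total degree $t+1$. The only (cosmetic) difference is that you compute $(B_\varphi)_1$ termwise and kill each non-principal piece by the strict inequality $a+b>t+1$, whereas the paper decomposes $B_\varphi$ wholesale and leaves implicit that the residual sums land in the components $B_k$ with $k\geq 2$; your bookkeeping makes that step explicit.
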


From $S_{00}^2(d,t)$ it is possible to obtain a large amount of elements in $Obj ({\bf Sym_0(d,t)})$.  In fact, for small $d$ and $t$, we will prove in the next section that the orbits of $S_{00}^2(d,t)$ provide the whole set of elements in ${\bf Sym_0(d,t)}$.

\section{Small type quadratic nilpotent Lie algebras}

A quadratic Lie algebra $(L,\varphi)$ is called \emph{decomposable} if it contains a proper ideal $I$ such that $\varphi_{I\times I}$ is nondegenerate, otherwise $L$ is called indecomposable. In this case, $L=I\oplus I^\perp$, $\varphi_{I^\perp\times I^\perp}$ is nondegenerate and $I^\perp$ is an ideal. Hence, any quadratic Lie algebra is the direct sum of orthogonal indecomposable Lie algebras. If $L=I_1\oplus I_2$ is a decomposable quadratic nilpotent Lie algebra, the type of $L$ is the sum of the types of $I_1$ and $I_2$ and the nilpotent index of $L$ is equal to ${\rm max}\{t_1,t_2\}$, where $t_i$ is the nilpotent index of $I_i$. Then, any quadratic abelian Lie algebra of dimension $\geq 2$ is decomposable and the quadratic nonabelian nilpotent  Lie algebras of type 2 are indecomposable.

We recall that two matrices, $A$ and $B$, are congruent if there exists a regular matrix $P$ such that $B=P^tAP$.

\begin{theo} Over any arbitrary field of characteristic $0$, the objects in the category ${\bf Sym_0(d,1)}$, $d\geq 1$, are the nondegenerate symmetric  bilinear forms $B_{2,1}^{A}$ determined by a $d\times d$ symmetric and regular matrix $A$.  Moreover:
\begin{enumerate}
\item[{\rm (i)}]Two bilinear forms $B_{d,1}^{A_1}$ and $B_{d,1}^{A_2}$ are isomorphic in ${\bf Sym_0(d,1)}$ if and only if $A_1$ and $A_2$ are congruent.
\item[{\rm (ii)}]Any quadratic Lie algebra of the form ${\bf Q_{d,1}}(B_{d,1}^{A})=(\mathfrak{n}_{d,1}, B_{d,1}^{A})$ decomposes as an orthogonal direct sum of $1$-dimensional ideals which are quadratic Lie algebras.
\item[{\rm (iii)}]$(\mathfrak{n}_{d,1}, B_{d,1}^{A_1})$ and $(\mathfrak{n}_{d,1}, B_{d,1}^{A_2})$ are isomorphic as metric Lie algebras if and only if the matrices $A_1$ and $A_2$ are congruent.
\end{enumerate}
\end{theo}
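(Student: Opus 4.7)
The plan is to exploit the fact that $\n_{d,1}$ is the abelian Lie algebra of dimension $d$. Since $\n_{d,1}^2=0$, the invariance condition $B([x,y],z)=B(x,[y,z])$ is trivially satisfied, so every symmetric bilinear form on $\n_{d,1}$ is invariant. The two defining conditions for an object of $\mathbf{Sym_0(d,1)}$, namely $\mathrm{Ker}(B)\subseteq \n_{d,1}^2=0$ and $\n_{d,1}^1=\n_{d,1}\not\subseteq \mathrm{Ker}(B)$, both collapse to the single requirement that $B$ be nondegenerate. Taking the set $\m=\{x_1,\dots,x_d\}$ of generators as a basis of $\n_{d,1}$, this identifies $\mathrm{Obj}(\mathbf{Sym_0(d,1)})$ with the set of regular symmetric $d\times d$ matrices $A=(B(x_i,x_j))$, which is exactly the description $B_{d,1}^{A}$ of the opening sentence.

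For (i), I would invoke Proposition \ref{productosemidirecto}: when $t=1$, the subgroup $N(d,1)$ is trivial because $\mathrm{End}(\mathfrak{s}_1,\n_{d,1}^2)=0$, and hence $\mathrm{Aut}\,\n_{d,1}=H(d,1)\cong GL(d)$; equivalently, every linear automorphism of the abelian algebra is a Lie automorphism. By Lemma \ref{isomorfismos}, isomorphism classes in $\mathbf{Sym_0(d,1)}$ are exactly the orbits under this $\mathrm{Aut}\,\n_{d,1}$-action. If $\theta\in\mathrm{Aut}\,\n_{d,1}$ has matrix $P\in GL(d)$ in the basis $\m$, then the Gram matrix of $(B_{d,1}^{A_2})_\theta$ is $P^tA_2P$, so the orbits are precisely the congruence classes of regular symmetric matrices, giving (i).

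Statement (iii) then follows at once by combining (i) with Corollary \ref{isomorfismo}: the equivalence of categories turns isomorphisms in $\mathbf{Sym_0(d,1)}$ into isometric isomorphisms of the corresponding metric Lie algebras. For (ii), I would use that over a field of characteristic zero any nondegenerate symmetric bilinear form admits an orthogonal basis: there is $P\in GL(d)$ with $P^tAP=\mathrm{diag}(a_1,\dots,a_d)$ and each $a_i\neq 0$. In the resulting basis $\{v_1,\dots,v_d\}$ of $\n_{d,1}$, each one-dimensional subspace $\mathbb{K}v_i$ is automatically an ideal (the algebra is abelian) and is nondegenerate under $B_{d,1}^{A}$, producing the desired orthogonal direct sum of one-dimensional quadratic ideals.

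The main obstacle is really just careful bookkeeping rather than any genuine mathematical difficulty: one must recognize that the $t=1$ case collapses all the Lie-theoretic machinery developed in Sections 3 and 4 to the classical linear algebra of symmetric bilinear forms on a vector space, and check that each abstract notion specializes cleanly — invariance becomes automatic, the nilpotent factor $N(d,1)$ disappears, metric Lie automorphisms reduce to $GL(d)$-matrices acting by congruence, and the functor $\mathbf{Q_{d,1}}$ simply quotients by the (trivial) radical.
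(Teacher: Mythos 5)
Your proposal is correct and follows essentially the same route as the paper's own proof: reduce membership in ${\bf Sym_0(d,1)}$ to nondegeneracy (since $\n_{d,1}^2=0$), get (i) from $\mathrm{Aut}\,\n_{d,1}=GL(d)$ acting by congruence on Gram matrices, get (ii) from the existence of an orthogonal basis in characteristic $\neq 2$, and get (iii) from the equivalence of categories (Corollary 4.3). The only difference is that you spell out some steps the paper leaves implicit (automatic invariance, triviality of $N(d,1)$, the appeal to Lemma 4.5), which does not change the argument.
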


\begin{proof} The kernel of any bilinear invariant form $B\in {\bf Sym_0(d,1)}$ is contained in $\mathfrak{n}_{2,1}^2=0$. So, these forms are of maximal rank $d$. The assertion (i) follows from the fact that $\text{Aut}\,\n_{d,1} = GL(d)$. Then the bilinear forms in the same orbit as $B_{2,1}^{A_1}$ are $B_{2,1}^{P^t A_1P}$, with $P\in GL(d)$. Finally, (ii) follows from the fact that any bilinear form on a finite dimensional vector space (characteristic $\neq 2$) has an orthogonal basis and (iii) follows from Corolllary \ref{isomorfismo}. 
\end{proof}

\begin{theo}\label{type2theorem} Over any field $\mathbb{K}$ of characteristic $0$, the set $Obj({\bf Sym_0(2,t)})$ is empty if $t=2,4$. For $t=3,5$ we have the following:
\begin{enumerate}

\item[{\rm (i)}]The objects in ${\bf Sym_0(2,3)}$ are the invariant forms $B_{2,3}^{A;\gamma}$ described in \emph{Example \ref{formasinvariantespequenas}} and given by any $2\times 2$ symmetric matrix $A$ and any $0\neq \gamma\in \mathbb{K}$. Moreover, $B_{2,3}^{A_1;\gamma}$ and $B_{2,3}^{A_2;\delta}$ are isomorphic in ${\bf Sym_0(2,3)}$ if and only if $\displaystyle{\frac{\delta}{\gamma}\in \mathbb{K}^2}$.

\item[{\rm (ii)}]The objects in ${\bf Sym_0(2,5)}$ are the invariant forms $B_{2,5}^{A_1;\gamma;A_2}$ described in \emph{Example \ref{formasinvariantespequenas}} for any $\gamma\in \mathbb{K}$ and any $2\times 2$ symmetric matrices $A_1$ and $A_2$ with ${\rm rank}\, A_2\geq 1$. Moreover, $B_{2,5}^{A_1;\gamma;A_2}$ and $B_{2,5}^{B_1;\delta;B_2}$ are isomorphic in ${\bf Sym_0(2,5)}$ if and only if $B_2=({\rm det}\, P)^2P^tA_2P$ for some $2\times 2$ regular matrix $P$.
\end{enumerate}

\end{theo}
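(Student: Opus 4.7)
The plan is to treat the four subclaims separately---emptiness of $\mathbf{Sym_0(2,t)}$ for $t=2,4$ and the structural descriptions for $t=3,5$---relying on three inputs: the explicit form of invariant bilinear forms from Example \ref{formasinvariantespequenas}, the decomposition $B=B_1+\cdots+B_t$ of Proposition \ref{Bk}, and the semidirect product $\mathrm{Aut}\,\n_{2,t}=H(2,t)\ltimes N(2,t)$ from Proposition \ref{productosemidirecto}. All isomorphism claims at the end reduce, by Lemma \ref{isomorfismos} and Corollary \ref{isomorfismo}, to computing $\mathrm{Aut}\,\n_{2,t}$-orbits in the explicit parameter spaces from Example \ref{formasinvariantespequenas}.

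For the emptiness claims, I would directly inspect the matrices $B_{2,2}^{A_1}$ and $B_{2,4}^{A_1;\gamma}$ and observe that the block indexed by the top graded piece $\mathfrak{s}_t$ vanishes in both cases. Therefore $\mathfrak{s}_t=\n_{2,t}^t\subseteq \ker B$, contradicting the defining condition of $\mathbf{Sym_0(2,t)}$; equivalently, $B(e_1,e_t)=0$ violates Proposition \ref{Bk}(iii).

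For (i), a short determinant computation gives $\det B_{2,3}^{A;\gamma}=\pm\gamma^5$, so the kernel conditions are met exactly when $\gamma\neq 0$; in that case $B$ is nondegenerate and automatically in $\mathbf{Sym_0(2,3)}$. Writing $B=B_1+B_3$ with $B_1$ the antidiagonal ($\gamma$) part and $B_3=B(e_1,e_1)$ the $A$-part, the lemma concluding Subsection 4.2 gives that every $\Phi_\rho\in N(2,3)$ fixes $B_1$ while acting nontrivially on $B_3$. Taking $\rho=I+\sigma$ with $\sigma(x_i)=a_i[x_2,x_1]+b_i[[x_2,x_1],x_1]+c_i[[x_2,x_1],x_2]$ and expanding $B_{\Phi_\rho}(x_i,x_j)$, the three scalars $\alpha,\beta,\delta$ can be independently annulled (since $\gamma\neq 0$), so every orbit meets the normal form $B_{2,3}^{0;\gamma}$. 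Finally, $H(2,3)\cong GL(2)$ acts on $\mathfrak{s}_2=\mathbb{K}[x_2,x_1]$ by the scalar $\det P$, forcing $\gamma\mapsto(\det P)^2\gamma$ while preserving $A=0$; this yields the criterion $\delta/\gamma\in\mathbb{K}^2$.

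Part (ii) follows the same scheme with the finer decomposition $B=B_1+B_3+B_5$, where $B_1$ is encoded by $A_2$, $B_3$ by $\gamma$, and $B_5$ by $A_1$. Proposition \ref{Bk}(iii) forces $B_1\neq 0$, i.e.\ $\mathrm{rank}\,A_2\geq 1$; conversely, a nonzero $A_2$ makes the pairing $\mathfrak{s}_1\times\mathfrak{s}_5$ have rank $2=\dim\mathfrak{s}_1$, so $\ker B\cap\mathfrak{s}_1=0$, guaranteeing the kernel conditions of $\mathbf{Sym_0(2,5)}$. Since $(B_\varphi)_1=B_1$ for $\varphi\in N(2,5)$, the subgroup $N(2,5)$ fixes $A_2$, while by a parameter count analogous to the $t=3$ case the lower-order blocks $A_1$ and $\gamma$ can be neutralised by a suitable $\Phi_\rho$. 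The main technical point, and the principal obstacle, is computing the $H(2,5)$-action on $A_2$: since $\Phi_P$ acts on $\mathfrak{s}_2$ by the scalar $\det P$, it acts on $\mathfrak{s}_3=[\mathfrak{s}_2,\mathfrak{s}_1]$ by the tensor representation $(\det P)\otimes P$, so the $2\times 2$ symmetric pairing matrix $A_2$ (naturally encoded by $B$ restricted to $\mathfrak{s}_3\times\mathfrak{s}_3$) transforms as $A_2\mapsto(\det P)^2 P^{t}A_2 P$. Combining with Lemma \ref{isomorfismos} and Corollary \ref{isomorfismo} yields the claimed criterion.
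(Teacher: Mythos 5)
Your overall strategy is the one the paper itself uses: emptiness for $t=2,4$ via the vanishing of the $\mathfrak{s}_1\times\mathfrak{s}_t$ pairing and Proposition \ref{Bk}(iii); the decomposition $B=B_1+B_3+(B_5)$ together with the fact that $N(d,t)$ fixes the top component $B_1$ while $H(d,t)$ acts on it through an explicit representation; and reduction to the normal forms $B_{2,3}^{0;\gamma}$, resp.\ $B_{2,5}^{0;0;A_2}$, by the action of the normal subgroup. The emptiness argument, part (i), and your computation of the $H(2,5)$-action on $A_2$ (the representation $(\det P)\otimes P$ on $\mathfrak{s}_3$, giving $A_2\mapsto(\det P)^2P^tA_2P$) all match the paper and are correct. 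Your derivation of the isomorphism criteria from the semidirect decomposition (the $B_1$-component only transforms through the $H$-factor) is in fact slightly cleaner than the paper's, which writes out the full automorphism matrices $R$ and $X(P;A,C,D,E)$ to read off the same conclusion.

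The gap is in part (ii), at the sentence ``by a parameter count analogous to the $t=3$ case the lower-order blocks $A_1$ and $\gamma$ can be neutralised by a suitable $\Phi_\rho$.'' A parameter count does not establish surjectivity of the orbit map of $N(2,5)$ onto the $(A_1,\gamma)$-block: the action is affine in $V$ (your $E$) but the coefficients of the linear part are the entries of $A_2$ and $A_2'$, so the image could a priori be a proper affine subspace, and the whole statement is false when $A_2=0$. This is precisely where the paper does its heaviest work: it exhibits explicit matrices $C$ and $E$ in three separate cases according to whether $f\neq 0$, or $f=0,d\neq 0$, or $f=d=0,e\neq 0$, because the entry of $C$ used to absorb $\gamma$ (namely $\gamma/2f$, $-\gamma/2d$, or $\gamma/2e$) depends on which entry of $A_2$ is available. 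In the $t=3$ case you did carry out the analogous verification and correctly flagged that it needs $\gamma\neq 0$; here you must likewise verify that $\operatorname{rank}A_2\geq 1$ suffices, and as written the proposal does not do so. A second, minor, imprecision: ``$\ker B\cap\mathfrak{s}_1=0$'' is weaker than what you need; the rank-$2$ pairing $\mathfrak{s}_1\times\mathfrak{s}_5$ actually gives $e_1(\ker B)=0$, i.e.\ $\ker B\subseteq\n_{2,5}^2$, which is the condition defining ${\bf Sym_0(2,5)}$ --- your argument proves this, but the statement should be phrased for arbitrary elements of the kernel, not only those lying in $\mathfrak{s}_1$.
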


\begin{proof} According to Proposition \ref{Bk}, any $B\in Obj ({\bf Sym_0(2,t)})$ satisfies that $0\neq B_1$. We will prove that for $t=2,4$ any invariant bilinear form $B$ satisfies $B_1=0$. The invariant form $B_1$ is completely determined by $B(e_i,e_j)$, as defined in (29),  for any pair $(i,j)$ such that $i+j=t+1$. If $t=2$, we consider the pair $(i,j)=(1,2)$ and note that: ${\mathfrak s}_1=span<x_1,x_2>$ and ${\mathfrak s}_2=span<[x_2,x_1]>$. From the invariance of $B$, we have that $B(x_2,[x_2,x_1])=B([x_2,x_2],x_1])=0$ and $B(x_1,[x_2,x_1])=-B([x_1,x_1],x_2])=0$. For $t=4$, we take the pair $(1,4)$; in this case, ${\mathfrak s}_4=span<[[[x_r,x_s],x_p],x_q]: (r,s)=(2,1), (1,2), p,q\in \{1,2\}>$ and $B(x_r, [[[x_r,x_s],x_p],x_r])=0$ and $B(x_r, [[[x_r,x_s],x_p],x_s])=-B([x_r,x_s],[[x_r,x_s],x_p])=0$. Hence, for $t=2,4$ the set $Obj({\bf Sym_0(2,t)})$ is empty because of (iii) in Proposition \ref{Bk}.

It is clear that $\mathfrak{n}_{2,3}^3$ is not contained in the kernel of $B_{2,3}^{A;\gamma}$ if and only if $\gamma\neq 0$. In this case $B_{2,3}^{A;\gamma}$ is nondegenerate and $(\mathfrak{n}_{2,3},B_{2,3}^{A;\gamma})$ is a quadratic Lie algebra.
Now, let $B_{2,3}^{{\tiny \left(
\begin{array}{cc}
u& v\\
v& w
\end{array}
\right)}
,\gamma}, B_{2,3}^{{\tiny \left(
\begin{array}{cc}
0& 0\\
0& 0
\end{array}
\right)}
,\gamma}\in Obj ({\bf Sym_0(2,3)})$ and consider $\theta_P\in \text{Aut}\,\n_{2,3}$ the automorphism given by the matrix (in the Hall basis $\mathcal{H}_{2,3}$):$$P=\left( \begin{array} {cc|c|cc}1 & 0 & 0 & 0 & 0 \\ 0 & 1 & 0 & 0 & 0\\\hline 0& 0 & 1 & 0 & 0 \\ \hline -\frac{1}{\gamma}v & -\frac{1}{2\gamma}w & 0 & 1 & 0 \\ \frac{1}{2\gamma}u & 0 & 0 & 0 & 1\end{array}\right).
$$We have that$$
P^t\left( \begin{array} {cc|c|cc}0 & 0 & 0 & 0 & \gamma \\ 0 & 0 & 0 & -\gamma & 0\\\hline 0& 0 & \gamma & 0 & 0 \\ \hline 0 & -\gamma & 0 & 0 & 0 \\ \gamma & 0 & 0 & 0 & 0\end{array}\right)P=\left( \begin{array} {cc|c|cc}u & v & 0 & 0 & \gamma \\ v & w & 0 & -\gamma & 0\\\hline 0& 0 & \gamma & 0 & 0 \\ \hline 0& -\gamma& 0 & 0 & 0 \\ \gamma & 0 & 0 & 0 & 0\end{array}\right)
$$On the other hand, for $\gamma, \delta \in \mathbb{K}$ such that $\gamma\delta\neq 0$ and $\frac{\delta}{\gamma}=\epsilon^2$ with $\epsilon \in \mathbb{K}$, the automorphism $\theta_Q\in \text{Aut}\,\n_{2,3}$ given by the matrix:$$Q=\left( \begin{array} {cc|c|cc}\epsilon & 0 & 0 & 0 & 0 \\ 0 & 1 & 0 & 0 & 0\\\hline 0& 0 & \epsilon & 0 & 0 \\ \hline 0& 0& 0 & \epsilon^2 & 0 \\ 0& 0 & 0 & 0 & \epsilon\end{array}\right)$$satisfies:
$$Q^t\left( \begin{array} {cc|c|cc}0 & 0 & 0 & 0 & \gamma \\ 0 & 0 & 0 & -\gamma & 0\\\hline 0& 0 & \gamma & 0 & 0 \\ \hline 0 & -\gamma & 0 & 0 & 0 \\ \gamma & 0 & 0 & 0 & 0\end{array}\right)Q=\left( \begin{array} {cc|c|cc}0 & 0 & 0 & 0 & \delta \\ 0 & 0 & 0 & -\delta & 0\\\hline 0& 0 & \delta & 0 & 0 \\ \hline 0& -\delta& 0 & 0 & 0 \\ \delta & 0 & 0 & 0 & 0\end{array}\right).$$This implies from Lemma \ref{isomorfismos} that $B_{2,3}^{{\tiny \left(
\begin{array}{cc}
u& v\\
v& w
\end{array}
\right)}
,\gamma}$ and $B_{2,3}^{{\tiny \left(
\begin{array}{cc}
0& 0\\
0& 0
\end{array}
\right)}
,\gamma}$ are isomorphic in the category ${\bf Sym_0(2,3)}$ and so are  $B_{2,3}^{{\tiny \left(
\begin{array}{cc}
0& 0\\
0& 0
\end{array}
\right)}
,\gamma}$ and $B_{2,3}^{{\tiny \left(
\begin{array}{cc}
0& 0\\
0& 0
\end{array}
\right)}
,\delta}$ in case $\gamma\delta\neq 0$ and $\frac{\delta}{\gamma}=\epsilon^2$ with $\epsilon \in \mathbb{K}$.

Finally, let as assume that $B_{2,3}^{A_1;\gamma}$ and $B_{2,3}^{A_2;\delta}$ are isomorphic in ${\bf Sym_0(2,3)}$. Then, from previous paragraph, $B_{2,3}^{{\bf 0}_{2\times 2};\gamma}$ and $B_{2,3}^{{\bf 0}_{2\times 2};\delta}$ are also isomorphic and because of Lemma \ref{isomorfismos}, we can find $\theta_R\in \text{Aut}\,\n_{2,3}$ where ($\epsilon=ad-bc\neq 0$)$$R=\left ( \begin{array} {c c  | c | c c} a & b &  0 & 0 & 0 \\  c & d & 0 & 0 & 0 \\ \hline \alpha_1 & \alpha_2 & \epsilon & 0 & 0 \\ \hline \alpha_3 & \alpha_4 & \alpha_2 & \epsilon a & \epsilon b \\ \alpha_5 & \alpha_6 & -\alpha_1 & \epsilon c &  \epsilon d   \end{array} \right)
$$such that $R^tB_{2,3}^{{\bf 0}_{2\times 2};\gamma}R=B_{2,3}^{{\bf 0}_{2\times 2};\delta}$. The last matrix equation implies $\gamma \epsilon^2=\delta$ and the equivalence in (i) follows.

In the sequel we consider the objects in the category ${\bf Sym_0(2,5)}$. For any invariant bilinear form $B_{2,5}^{A_1;\gamma;A_2}$ on  $\mathfrak{n}_{2,5}$, it is easily checked that $\mathfrak{n}_{2,5}^3$ is not contained in the kernel of this form if and only if ${\rm rank}\, A_2\geq 1$ if and only if the kernel is contained in $\mathfrak{n}_{2,5}^2$. Let $B_{2,5}^{A_1;\gamma;A_2}, B_{2,5}^{{\bf 0}_{2\times 2};0;A_2}\in Obj({\bf Sym_0(2,5)})$ where$$A_1=\left(
\begin{array}{cc}
p& q\\
r& s
\end{array}
\right), A_2=\left(
\begin{array}{cc}
d& e\\
e& f
\end{array}\right)
\neq {\bf 0}_{2\times 2}.$$We are interested in proving that both objects are isomorphic. For this, we will use the automophism subgroup $N(2,5)$. The matrix elements in this subgroup (with respect to the Hall basis $\mathcal{H}_{2,5}$) are of the form:$$X(A,C,D,E)= \left( \begin{array} {c|c|c|c|c} I_2 & 0 & 0 & 0& 0 \\ \hline A& 1 & 0 & 0 &0 \\ \hline C&  A^\prime & I_2 & 0 & 0 \\ \hline D & C^\prime & A^{\prime \prime}& I_3 & 0 \\   \hline E &  D^\prime& C^{\prime \prime}& A^{\prime \prime \prime} & I_6    \end{array} \right ),$$
where $A\in M_{1\times 2}(K), C\in M_{2\times 2}(K), D\in M_{3\times 2}(K), E\in M_{6\times 2}(K)$,  but $A^\prime \in M_{2\times 1}(K), A^{\prime \prime} \in M_{3\times 2}(K), A^{\prime\prime \prime} \in M_{6\times 3}(K)$ are determined by $A$, and $C^\prime \in M_{3\times 1}(K),$ $C^{\prime \prime} \in M_{6\times 2}(K)$ are determined by $C$ and $D^\prime \in M_{6\times 1}(K)$ is determined by $D$. Let us assume firstly that $f\neq 0$ and consider the matrices
$$A={\bf 0}_{1\times 2}, C=\left(
\begin{array}{cc}
0& 0\\
\frac{\gamma}{2f}& 0
\end{array}\right), D={\bf 0}_{3\times 2},E^t=\left(
\begin{array}{cccccc}
0&0&0&0&0&\frac{\gamma^2-4pf}{8f^2}\\
0&0&0&0&\frac{r}{2f}&\frac{er-2qf}{2f^2}
\end{array}\right)$$For the automorphism $\theta_{X(A,C,D,E)}$ we have that $A',A'',A'''$ and $D'$ are zero matrices and $(C')^t=(0\ 0\ -\frac{\gamma}{2f})$ and $$X(A,C,D,E)^tB_{2,5}^{{\bf 0}_{2\times 2};0;A_2}X(A,C,D,E)=B_{2,5}^{A_1;\gamma;A_2}.
$$(We note that, denoting by $B_{\theta_X}$ the invariant bilinear form on the left part and using Proposition \ref{Bk}, for checking the identity, we only need to compute $B_{\theta_X}(e_i,e_i)$ for $i=1,2,3$; this computations involves the matrices $C'$, $A_2, A'_2,A''_2$ and $E$.)

Next let us suppose that $f=0, d\neq 0$ and consider the matrices
$$A={\bf 0}_{1\times 2}, C=\left(
\begin{array}{cc}
0& -\frac{\gamma}{2d}\\
0& 0
\end{array}\right), D={\bf 0}_{3\times 2},E^t=\left(
\begin{array}{cccccc}
\frac{q}{d}&-\frac{p}{2d}&0&0&0&0\\
\frac{4dr-\gamma^2}{8d^2}&0&0&0&0&0
\end{array}\right).$$In this case, $(C')^t=( -\frac{\gamma}{2d}\ 0\ 0)$ and $$X(A,C,D,E)^tB_{2,5}^{{\bf 0}_{2\times 2};0;A_2}X(A,C,D,E)=B_{2,5}^{A_1;\gamma;A_2}.
$$
Finally, if $f=d=0$, we have $e\neq 0$ and we can consider the automorphism $\theta(A,C,D,E)$ where$$A={\bf 0}_{1\times 2}, C=\left(
\begin{array}{cc}
\frac{\gamma}{2e}& 0\\
0& 0
\end{array}\right), D={\bf 0}_{3\times 2},E^t=\left(
\begin{array}{cccccc}
0&0&\frac{q}{e}&0&\frac{p}{2e}&0\\
0&0&\frac{r}{2e}&0&0&0
\end{array}\right).$$
Then, $(C')^t=( 0\ -\frac{\gamma}{2e}\ 0)$ and $$X(A,C,D,E)^tB_{2,5}^{{\bf 0}_{2\times 2};0;A_2}X(A,C,D,E)=B_{2,5}^{A_1;\gamma;A_2}.$$From the previous reasoning, the final assertion in item (ii) of this theorem is equivalent to the following statement: $B_{2,5}^{{\bf 0}_{2\times 2};0;A_2}$ and $B_{2,5}^{{\bf 0}_{2\times 2};0;B_2}$ are isomorphic if and only if $B_2=({\rm det}\, P)^2P^tA_2P$. The necessary condition follows from the general form of any automorphism of $\mathfrak{n}_{2,5}$:$$X(P;A,C,D,E)= \left( \begin{array} {c|c|c|c|c} P & 0 & 0 & 0& 0 \\ \hline A& P' & 0 & 0 &0 \\ \hline C&  A^\prime & P'' & 0 & 0 \\ \hline D & C^\prime & A^{\prime \prime}& P''' & 0 \\   \hline E &  D^\prime& C^{\prime \prime}& A^{\prime \prime \prime} & P''''    \end{array} \right ),$$
where $P$ is a regular matrix, $P'={\rm det}\, P$ and $P''={\rm det}\, P\cdot P$; the rest of the submatrices under the principal diagonal are as in the case $N(2,5)$. From the notion of isomorphism we get$$
X(P;A,C,D,E)^tB_{2,5}^{{\bf 0}_{2\times 2};0;A_2}X(P;A,C,D,E)=B_{2,5}^{{\bf 0}_{2\times 2};0;B_2}
$$and so we get the relation $B_2=(det\, P)^2P^tA_2P$. The converse follows by using the automorphism $\theta_{X(P;0,0,0,0)}$.

\end{proof}

If $\mathbb{K}$ is a field, we can consider the group $\mathbb{K}^*$ with the product of the field, where $\mathbb{K}^*= \mathbb{K}-\{0\}$. Then $(\mathbb{K}^*)^2= \{ k^2: k\in \mathbb{K^*}\}$ is a subgroup of $\mathbb{K}^*$, and we can consider the quotient group $\mathbb{K}^* /\mathbb({\mathbb{K}}^*)^2$. 

\begin{coro}\label{type2} Over any field $\mathbb{K}$ of characteristic $0$, there are no nilpotent quadratic Lie algebras of type $2$ and nilpotent index $2$ or $4$. In addition, up to metric isomorphism:
\begin{enumerate}

\item[{\rm (i)}]The nilpotent quadratic Lie algebras of type 2 and nilpotent index 3 are of the form $(\mathfrak{n}_{2,3},B_{2,3}^{0;\gamma})$ for any $0\neq \gamma\in \mathbb{K}$. Moreover, $(\mathfrak{n}_{2,3},B_{2,3}^{0;\gamma})$ and $(\mathfrak{n}_{2,3},B_{2,3}^{0;\delta})$ are isometrically isomorphic if and only if  $\delta$ and $\gamma$ are congruent module $({\mathbb {K}^*})^2$.

\item[{\rm (ii)}]The nilpotent quadratic Lie algebras of type 2 and nilpotent index 5 are of the form $(\frac{\mathfrak{n}_{2,5}}{{\rm Ker}\, B_{2,5}^{0;0;A}},\overline{B_{2,5}^{0;0;A}})$ for any arbitrary $2\times 2$ nonzero symmetric matrix $A$. Moreover, $(\frac{\mathfrak{n}_{2,5}}{{\rm Ker}\, B_{2,5}^{0;0;A}}, \overline{B_{2,5}^{0;0;A}})$ and $(\frac{\mathfrak{n}_{2,5}}{{\rm Ker}\, B_{2,5}^{0;0;B}}, \overline{B_{2,5}^{0;0;B}})$ are isometrically isomorphic  if and only if $B=({\rm det}\, P)^2P^tAP$ for some $P$ regular $2\times 2$ matrix.
\end{enumerate}

\noindent All the quadratic algebras described in {\rm (i)} and {\rm (ii)} are indecomposable. We notice that in (i) the number of nonisomorphic algebras is the cardinal of the group $\mathbb{K}^*/(\mathbb{K}^*)^2$. Also we notice that in (ii),  if  $\mathbb{K}$ satisfies that for every $\alpha \in  \mathbb{K}$ there exists $\delta \in \mathbb{K}$ such that $\delta ^3 = \alpha$, then $(\frac{\mathfrak{n}_{2,5}}{{\rm Ker}\, B_{2,5}^{0;0;A}}, \overline{B_{2,5}^{0;0;A}})$ and $(\frac{\mathfrak{n}_{2,5}}{{\rm Ker}\, B_{2,5}^{0;0;B}}, \overline{B_{2,5}^{0;0;B}})$ are isomorphic as metric Lie algebras if and only if $A$ and $B$ are congruent.

\end{coro}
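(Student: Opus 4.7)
The plan is to derive everything directly from Theorem \ref{type2theorem} via the categorical equivalence established in Section 4. By Corollary \ref{isomorfismo}, metric isomorphism classes of $t$-nilpotent quadratic Lie algebras of type $d$ are in bijection with isomorphism classes of objects of ${\bf Sym_0(d,t)}$, so the classification statements transport directly. The non-existence claim for $t=2$ and $t=4$ is then immediate: Theorem \ref{type2theorem} establishes that ${\bf Sym_0(2,t)}$ is empty in these cases, hence so is ${\bf NilpQuad}_{2,t}$.

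For part (i), I would use the fact (already proved inside Theorem \ref{type2theorem}(i) through the explicit automorphism $\theta_P$) that every object $B_{2,3}^{A;\gamma}$ in ${\bf Sym_0(2,3)}$ is isomorphic to $B_{2,3}^{0;\gamma}$, so each isomorphism class has a representative of the form $B_{2,3}^{0;\gamma}$ with $\gamma\neq 0$. Since this form is nondegenerate, its kernel is trivial and ${\bf Q_{2,3}}(B_{2,3}^{0;\gamma})=(\mathfrak{n}_{2,3},B_{2,3}^{0;\gamma})$. The criterion $\delta/\gamma\in(\mathbb{K}^*)^2$ from Theorem \ref{type2theorem}(i) is precisely the condition that $\gamma$ and $\delta$ lie in the same coset modulo $(\mathbb{K}^*)^2$, so the isomorphism classes are in bijection with the group $\mathbb{K}^*/(\mathbb{K}^*)^2$. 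Part (ii) is analogous: Theorem \ref{type2theorem}(ii) shows that every object of ${\bf Sym_0(2,5)}$ is isomorphic to some $B_{2,5}^{0;0;A}$ with $A$ a nonzero symmetric matrix, and the criterion $B=(\det P)^2 P^tAP$ transports to the metric setting.

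Indecomposability in both cases follows immediately from the remark opening Section 5: every nonabelian nilpotent quadratic Lie algebra of type $2$ is indecomposable. The quotient $\mathfrak{n}_{2,5}/\mathrm{Ker}\, B_{2,5}^{0;0;A}$ retains type $2$ because $\mathrm{Ker}\, B_{2,5}^{0;0;A}\subseteq \mathfrak{n}_{2,5}^2$, and is nonabelian since it is genuinely $5$-step nilpotent, so the remark applies.

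The only step that requires actual calculation, and hence the main obstacle, is the final parenthetical claim about cube roots in (ii). One direction is trivial: if $B=(\det P)^2 P^tAP$, then $Q=(\det P) P$ satisfies $Q^tAQ=B$, so $A$ and $B$ are congruent. For the converse, given $B=Q^tAQ$, I would try $P=\mu Q$; since $P$ is $2\times 2$, $\det P=\mu^2\det Q$ and a direct calculation gives
$$(\det P)^2 P^tAP=\mu^6(\det Q)^2 Q^tAQ=\mu^6(\det Q)^2 B,$$
so I need $\mu^6=(\det Q)^{-2}$. Choosing $\mu$ to be a cube root of $(\det Q)^{-1}$, which exists by hypothesis, yields $\mu^6=(\det Q)^{-2}$ as required, completing the argument.
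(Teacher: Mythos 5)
Your proposal is correct and follows essentially the same route as the paper: parts (i) and (ii) are obtained by transporting Theorem \ref{type2theorem} through Corollary \ref{isomorfismo}, and the final cube-root claim is handled exactly as in the paper's proof, which takes $R=(\sqrt[3]{\det P})^{-1}P$ so that $\det(R)^2R^tAR=B$ whenever $B=P^tAP$ --- the same rescaling you perform with $\mu$. Your additional remarks on indecomposability and on the bijection with $\mathbb{K}^*/(\mathbb{K}^*)^2$ only make explicit what the paper leaves to the opening remark of Section 5.
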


\begin{proof} We just need to apply the previous Therorem \ref{type2theorem} and Corollary \ref{isomorfismo} to prove (i) and  (ii). For the last statement about (ii), we notice that if $B= P^tAP$ with $P$ regular, then $R= (\sqrt[3]{det(P)}) ^{-1} P$ satisfies that $B= det(R)^2 R^t A R$.

\end{proof}

From now on, we will tackle the case ${\bf Sym_0(3,t)}$. From Example \ref{formasinvariantespequenas2}, the invariant bilinear forms on $\mathfrak{n}_{3,2}$ and $\mathfrak{n}_{3,3}$ are given by the matrices on the Hall basis:
\begin{equation}\label{A1A2}
(\mathfrak{n}_{3,2}, B^{A_1;\gamma}): \left ( \begin{array} {c|c}A_1& W_\gamma\\ \hline W_\gamma & 0 \end{array} \right), \quad (\mathfrak{n}_{3,3},B^{A_1;\gamma;A_2}): \left ( \begin{array} {c|c|c}A_1& W_\gamma& A_2^\prime\\ \hline W_\gamma & A_2& 0 \\ \hline A_2^{\prime t} & 0 & 0\end{array} \right) 
\end{equation}
where $A_1$ and $A_2$ are $3\times 3$ symmetric matrices, $\delta \in \mathbb{K}$ and 
\begin{equation}\label{Wgamma-C}
W_\delta=\begin{pmatrix} 0 & 0 & \gamma  \\ 0 & -\gamma & 0 \\  \gamma & 0 & 0 \end{pmatrix}=\gamma \cdot C, \quad\mathrm{where}\quad C=\begin{pmatrix} 0 & 0 & 1 \\ 0 & -1 & 0 \\  1 & 0 & 0 \end{pmatrix};
\end{equation}\label{objetos 3-2-3}
$A^\prime_2$ is a $3\times 8$ matrix completely determined by $A_2$ as follows:
\begin{equation}\label{A2A2prima}
A_2={\small\left(
\begin{array}{ccc}
a& b& c\\
b& d&e\\
c& e&f
\end{array}
\right)}\quad \text{and}\quad 
A_2'=\begin{pmatrix} 0 & a & b & 0 & b & d & c & e \\ -a & 0 & c & -b & 0 & e & 0 & f \\  -b & -c & 0 & -d & -e & 0 & -f &0 \end{pmatrix}.
\end{equation}

In the following lemma, we discuss the indecomposibility of $\mathbf{Q_{3,t}}(B)$ for $B\in {\bf Sym_0(3,t)}$.
\medskip

\begin{lem}\label{restricciones} Let $\gamma$ be any scalar and $A_1$, $A_2$ symmetric $3\times 3$ matrices over a field of characteristic $0$. The objects in ${\bf Sym_0(3,2)}$ are the bilinear forms $B_{3,3}^{A_1;\gamma}$ with $\gamma\neq 0$ and for the bilinear forms $B_{3,3}^{A_1;\gamma;A_2}$, we have that:
\begin{enumerate}
\item[\emph{(i)}] ${\rm rank\, A_2}\geq 2$ if and only if ${\rm rank\, A_2'}= 3$. In this case, $B_{3,3}^{A_1;\gamma;A_2}$ is a well-defined object in ${\bf Sym_0(3,3)}$.
\item[\emph{(iii)}]${\rm rank\, A_2}= 1$ if and only if ${\rm rank\, A_2'}= 2$.
\end{enumerate}
Moreover, $\mathbf{Q_{3,3}}(B)$ is an indecomposable quadratic Lie algebra except for the objects in ${\bf Sym_0(3,3)}$ of the form $B_{3,3}^{A_1;\gamma;A_2}$ with ${\rm rank\, A_2}= 1$.
\end{lem}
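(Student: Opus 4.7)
The plan is to tackle the four assertions of the lemma in order, exploiting the interpretation of the entries of $A_2'$ as a linear map derived from $A_2$ via the invariance identity $B(x_i,[y_j,x_k])=-B(y_j,[x_i,x_k])$.

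First I would settle the ${\bf Sym_0(3,2)}$ case. Since $\mathfrak{n}_{3,2}^2=\mathfrak{s}_2$ and the only off-diagonal block of the matrix of $B_{3,2}^{A_1;\gamma}$ is $W_\gamma$, both defining conditions of the category (namely ${\rm Ker}(B)\subseteq \mathfrak{n}_{3,2}^2$ and $\mathfrak{n}_{3,2}^2\not\subseteq {\rm Ker}(B)$) reduce to the nondegeneracy of $W_\gamma$. A direct computation gives ${\rm det}\,W_\gamma=\gamma^3$, so both conditions hold precisely when $\gamma\neq 0$.

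Second, for items (i) and (iii) I would reinterpret the left null space of $A_2'$ via the Lie structure. The invariance identity above shows that $x\in \mathfrak{s}_1$ lies in the left null space of $A_2'$ if and only if $[x,\mathfrak{s}_1]\subseteq {\rm Ker}\,A_2$. In the free algebra the restriction of ${\rm ad}\,x$ to $\mathfrak{s}_1$ has kernel $\mathbb{K}x$, so $\dim [x,\mathfrak{s}_1]=2$ for every $x\neq 0$. If ${\rm rank}\,A_2\geq 2$ then $\dim {\rm Ker}\,A_2\leq 1<2$ forces $x=0$ and hence ${\rm rank}\,A_2'=3$; whereas if ${\rm rank}\,A_2=1$, the condition $[x,\mathfrak{s}_1]={\rm Ker}\,A_2$ selects a unique line of $x$'s via the bijection $\mathbb{P}(\mathfrak{s}_1)\to {\rm Gr}(2,\mathfrak{s}_2)$, $x\mapsto [x,\mathfrak{s}_1]$, giving ${\rm rank}\,A_2'=2$; the converse implications are immediate. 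For the well-definedness of $B_{3,3}^{A_1;\gamma;A_2}$ as an object when ${\rm rank}\,A_2\geq 2$, writing out the three block equations for ${\rm Ker}(B)$ shows that the $\mathfrak{s}_1$-projection of the kernel is contained in the left null space of $A_2'$, hence is zero; and $A_2'\neq 0$ whenever $A_2\neq 0$ ensures $\mathfrak{n}_{3,3}^3\not\subseteq {\rm Ker}(B)$.

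Third, for the indecomposability of $g:={\bf Q_{3,3}}(B)$ when ${\rm rank}\,A_2\geq 2$, I would argue from the type constraint. Since $g$ has type $3$, any orthogonal direct sum decomposition $g=I\oplus J$ with both factors nontrivial forces one factor (say $I$) to be $1$-dimensional, hence abelian; being a $1$-dimensional ideal of a nilpotent Lie algebra, $I$ is central. The duality $(g^i)^\perp=Z_i(g)$ gives $\dim Z(g)=\dim g-\dim g^2=3$, and from ${\rm rank}\,A_2'=3$ one obtains $\dim g^3=3$; together with $g^3\subseteq Z(g)$ this forces $Z(g)=g^3$. But the invariance of $B$ yields $B|_{g^3}\equiv 0$ (for $[u,v]\in g^3$ and $y\in g^3$ one has $B([u,v],y)=B(u,[v,y])=0$ since $[v,y]\in g^4=0$), contradicting the nondegeneracy of $B|_I$.

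Finally, and most delicately, for the remaining case ${\rm rank}\,A_2=1$ with $B$ a valid object, write $A_2=\lambda u_0u_0^t$; the analysis of step two produces a distinguished $x_0\in \mathfrak{s}_1$ with $[x_0,\mathfrak{s}_1]=u_0^\perp={\rm Ker}\,A_2$. A direct check using $W_\gamma=\gamma C$ with $C^2=I$ shows that $W_\gamma x_0=\mu u_0\in {\rm im}\,A_2$, so there exists $y_0\in \mathfrak{s}_2$ with $A_2y_0=-W_\gamma x_0$; the element $w=x_0+y_0$ then satisfies $[w,\mathfrak{n}_{3,3}]\subseteq {\rm Ker}(B)$, defining a class $[w]\in Z(g)\setminus g^3$ that fills the dimension gap $\dim Z(g)=3>2=\dim g^3$. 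A parallel computation of $B(w,w)$ and of the single obstruction preventing $x_0$ from being extended to an element of ${\rm Ker}(B)$ yields in both cases the same scalar $x_0^tA_1x_0-\mu^2/\lambda$, so $B$ is a valid object (i.e.\ $x_0$ is not extendable) if and only if $B(w,w)\neq 0$; this then provides the orthogonal decomposition $g=\mathbb{K}\cdot[w]\oplus (\mathbb{K}\cdot[w])^\perp$. The principal obstacle is verifying this clean coincidence between the validity obstruction for $x_0$ and the value $B(w,w)$, which requires careful tracking of the invariance relations specialized to the rank-one structure of $A_2$.
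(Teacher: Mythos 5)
Your proposal is correct, but it takes a genuinely different route from the paper's at each of the three nontrivial steps, so a comparison is worthwhile. For the rank statements (i)--(ii) the paper argues by explicit column operations on the $3\times 8$ matrix $A_2'$ (auxiliary matrices $D$, $D'$, $D''$ and a case split on $(a,b,c)$), whereas you identify the left null space of $A_2'$ with $\{x\in\mathfrak{s}_1 : [x,\mathfrak{s}_1]\subseteq \mathrm{Ker}\,A_2\}$ via invariance and use $\dim[x,\mathfrak{s}_1]=2$ together with the injectivity of $x\mapsto x\wedge\mathfrak{s}_1$; this is shorter and explains conceptually why the ranks are $3$ and $2$. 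For indecomposability when $\mathrm{rank}\,A_2\geq 2$, the paper adapts a generating set to a hypothetical splitting and reads off $\mathrm{rank}\,B(e_2,e_2)=1$; your argument via $Z(g)=g^3$ and $B|_{g^3\times g^3}\equiv 0$ is equally valid and avoids the citation of Tsou--Walker. For the rank-one case the paper's dimension count actually contains a slip: the quotient has dimension $6$, not $5$ (indeed $\dim g^3=\mathrm{rank}\,A_2'=2$ forces $\dim g^2\geq 3$ and $\dim g\geq 6$), so the bare inequality $\dim Z(g)>\dim g^2$ is not available and one genuinely needs a central element outside $g^2$ --- which is exactly what your $w=x_0+y_0$ supplies. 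The single step you flag as unverified, namely that the obstruction to extending $x_0$ to a kernel element coincides with $B(w,w)$, does check out: $\mathrm{col}(A_2')=x_0^\perp$ (orthogonal complement of the left null space) and $x_0^tW_\gamma k=(W_\gamma x_0)^tk=\mu\,u_0^tk=0$ for $k\in\mathrm{Ker}\,A_2$, so the extension condition reduces precisely to $x_0^t(A_1x_0+W_\gamma y_0)=B(w,w)=0$. You could also bypass that computation entirely: once $[w]\in Z(g)\setminus g^2=Z(g)\setminus Z(g)^\perp$, the form $B$ restricted to $Z(g)$ is not identically zero, so in characteristic $\neq 2$ the center contains an anisotropic vector spanning a nondegenerate one-dimensional (hence central) ideal, and $g$ decomposes.
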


\begin{proof} The first assertion is clear. To prove (i) and (ii), let $A_2$ and $A_2^\prime$ be as in (\ref{A2A2prima}). Denoting as $c_i$ the columns of $A_2^\prime$ and taking the matrix $D$ with columns $c_i(D)$ given by $c_1(D)=c_2, c_2(D)=c_5, c_3(D)= c_7, c_4(D)=-c_1, c_5(D)=-c_4, c_6(D)=c_3-c_5, c_7(D)= c_6, c_8(D)= c_8$ and $D'$ obtained form $D$ by reordering rows and columns:
\begin{equation*}
D=\begin{pmatrix} a & b & c & 0 & 0 & 0 & d & e \\ 0 & 0 & 0 & a & b & c & e & f \\  -c & {\footnotesize -}e & {\footnotesize -}f & b & d & e & 0 &0 \end{pmatrix},\quad D'=\begin{pmatrix} a & b & c & 0 & 0 & 0 & e & f \\ b & d & e & {\footnotesize -}c & {\footnotesize -}e & {\footnotesize -}f & 0 & 0 \\  0 & 0 & 0 & a & b & c & d &e \end{pmatrix}
\end{equation*}
We observe that ${\rm rank}\, A_2'={\rm rank}\, D={\rm rank}\, D'$. If either ${\rm rank}\, A_2=3$, or  ${\rm rank}\, A_2=2$ with $(a,b,c)\neq (0,0,0)$, it is clear that ${\rm rank}\, D'=3$. If ${\rm rank}\, A_2=2$ and  $(a,b,c)=(0,0,0)$, then $df\neq e^2$ and therefore ${\rm rank}\, D'=3$. Let us assume that ${\rm rank}\, A_2=1$. If $(a,b,c)=(0,0,0)$, using $df=e^2$ we easily get ${\rm rank}\, A_2'={\rm rank}\, D'=2$. Otherwise $(a,b,c)\neq(0,0,0)$ and $\lambda(a,b,c)=(b,d,e)$, $\mu(a,b,c)=(c,e,f)$. Then, the matrix $D''$ obtained from $D$ by changing the third row $r_3$ by $r'_3=r_3+\mu r_1-\lambda r_2$ and $r'_1=r_1$ and $r'_2=r_2$ is:
\begin{equation}
D''=\begin{pmatrix} a & b & c & 0 & 0 & 0 & d & e \\ 0 & 0 & 0 & a & b & c & e & f \\  0 & 0 & 0 & 0 & 0 & 0 & \mu d-\lambda e=0 &\mu e-\lambda f=0 \end{pmatrix}
\end{equation}
Hence, ${\rm rank}\, A_2'={\rm rank}\, D'={\rm rank}\, D''=2$. This proves (i) and (ii).

Finally, let $B\in {\bf Sym_0(3,t)}$ such that $\mathbf{Q_{3,t}}(B)=(\n, \overline{B})$ is a decomposable quadratic Lie algebra. Then, $\n=\frac{\n_{3,t}}{Ker(B)}$ is the orthogonal sum of  two quadratic Lie algebras of types $2$ and $1$. In this case, there exists a minimal generator set $\{x_1,x_2,x_3\}$ of $\n_{3,t}$ such that $[x_3,x_1], [x_3,x_2] \in Ker (B)$, so the rank of the form $B(e_2,e_2)$ is $1$. Therefore, $t=3$ and $B=B_{3,3}^{A_1;\gamma;A_2}$ with ${\rm rank\, A_2}= 1$. On the other hand, any quadratic Lie algebra $\mathbf{Q_{3,3}}(B)$ with $B=B_{3,3}^{A_1;\gamma;A_2}\in {\bf Sym_0(3,3)}$ and ${\rm rank\, A_2}= 1$ is of type $3$ and dimension $5$. Thus the derived algebra $\n^2$ has dimension $2$. If $\n$ is indecomposable, from \cite[section 6.2]{T-W}, $Z(\n)\subseteq \n^2$, but using (\ref{fdimension}), $Z(\n)$ must be $3$-dimensional, which implies $\mathrm{dim}\, \n\geq 6$, a contradiction.

\end{proof}

%%%%%%%%%%%

\begin{theo}\label{sym3} Over any field $\mathbb{K}$ of characteristic zero, the categories ${\bf Sym_0(3,2)}$ and ${\bf Sym_0(3,3)}$ satisfy the following features:

\begin{enumerate}
\item[{\rm (i)}]The objects in ${\bf Sym_0(3,2)}$ are the invariant forms $B_{3,2}^{A;\gamma}$ described in Example \ref{formasinvariantespequenas2} and given by any $3\times 3$ symmetric matrix $A$ and any $0\neq \gamma\in \mathbb{K}$. Moreover, any $B_{3,2}^{A;\gamma}$ is isomorphic to $B_{3,2}^{\mathbf{0}_{3\times 3};1}$ in ${\bf Sym_0(3,2)}$.

\item[{\rm (ii)}] The objects $B\in {\bf Sym_0(3,3)}$ such that $\mathbf{Q_{3,3}}(B)$ is an indecomposable quadratic Lie algebra are the invariant forms $B_{3,3}^{A_1;\gamma;A_2}$ described in \emph{Example \ref{formasinvariantespequenas2}},  and given by any scalar $\gamma\in \mathbb{K}$ and any pair of $3\times 3$ symmetric matrices ($A_1$,$A_2$) with ${\rm rank\, A_2}\geq 2$. Moreover, for symmetric matrices $A_2,B_2$ of ${\rm rank}\, \geq 2$, $B_{3,3}^{A_1;\delta;A_2}$ and $B_{3,3}^{B_1;\gamma;B_2}$ are isomorphic in ${\bf Sym_0(3,3)}$ if and only if $CB_2C=({\rm Adj}\, P)^tCA_2C({\rm Adj}\, P)$ for some $3\times 3$ regular matrix $P$ and $C$ the matrix given in \emph{(\ref{Wgamma-C})}.
\end{enumerate}
\end{theo}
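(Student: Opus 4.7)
For part (i), the first statement of Lemma~\ref{restricciones} identifies the objects of $\mathbf{Sym}_0(3,2)$ as the forms $B_{3,2}^{A;\gamma}$ with $\gamma\neq 0$, so it remains to reduce each such form to $B_{3,2}^{\mathbf{0};1}$. The plan follows the two-step template of Theorem~\ref{type2theorem}. First, conjugate by the element of $N(3,2)$ with Hall-basis matrix $\left(\begin{smallmatrix}I_3 & 0\\ B & I_3\end{smallmatrix}\right)$: this sends the $A$-block to $A+B^tW_\gamma+W_\gamma B$, and since $W_\gamma=\gamma C$ is symmetric and invertible, taking $B=-\tfrac12\,W_\gamma^{-1}A$ produces $B_{3,2}^{\mathbf{0};\gamma}$. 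Second, conjugate by the element of $H(3,2)$ attached to $P\in GL(3)$: denoting by $M(P)$ the induced action on $\mathfrak{s}_2$, a direct cofactor expansion yields
\begin{equation*}
P^tW_\gamma M(P)=\gamma\det(P)\,C,
\end{equation*}
so any $P$ with $\det(P)=1/\gamma$ rescales $\gamma$ to $1$.

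For part (ii), Lemma~\ref{restricciones} tells us the objects with indecomposable image are exactly the $B_{3,3}^{A_1;\gamma;A_2}$ with $\mathrm{rank}(A_2)\geq 2$. The plan is first to use $N(3,3)$ to kill $A_1$ and $\gamma$, then to read off the $A_2$-invariant via $H(3,3)$. Mimicking the three-case construction used for $\n_{2,5}$ in Theorem~\ref{type2theorem}---splitting according to which entry of $A_2$ is nonzero (the hypothesis $\mathrm{rank}(A_2)\geq 2$ guarantees that at least one of the cases applies)---I would write down an explicit element of $N(3,3)$ whose $T^tBT$-action kills both the $A_1$-block and the $(\mathfrak{s}_1,\mathfrak{s}_2)$-block $\gamma C$. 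Since the $(\mathfrak{s}_2,\mathfrak{s}_3)$- and $(\mathfrak{s}_3,\mathfrak{s}_3)$-blocks of $B_{3,3}^{\mathbf{0};0;A_2}$ both vanish, a short projection calculation shows that $A_2$ is preserved by every $N(3,3)$-conjugation, and the $(\mathfrak{s}_1,\mathfrak{s}_3)$-block $A_2'$ is then forced by invariance. This reduces the classification to $H(3,3)$-orbits on forms $B_{3,3}^{\mathbf{0};0;A_2}$.

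For the $H(3,3)$-step, a general element arises from $P\in GL(3)$ acting on $\mathfrak{s}_1$, with induced action $M(P)$ on $\mathfrak{s}_2$ whose entries are $2\times 2$ minors of $P$ arranged in the Hall basis order. Expanding entries and using $C^2=I$ yields the key identity
\begin{equation*}
M(P)=C\cdot(\mathrm{Adj}\,P)^t\cdot C.
\end{equation*}
Substituting this into the transformation $B_2=M(P)^tA_2M(P)$ and conjugating by $C$ on both sides gives $CB_2C=\mathrm{Adj}(P)\cdot CA_2C\cdot(\mathrm{Adj}\,P)^t$, which, upon replacing $P$ by $P^t$ (a legitimate relabelling since the quantifier ranges over all of $GL(3)$), coincides with the stated condition. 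Conversely, every $P\in GL(3)$ produces such an $H$-automorphism, so the condition is both necessary and sufficient.

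The main obstacle is the $N(3,3)$-reduction in (ii): it requires an explicit case-by-case construction of unipotent automorphisms parallel to but bulkier than the $\n_{2,5}$ analysis of Theorem~\ref{type2theorem}, since $N(3,3)$ has more graded components and one must kill both $A_1$ and $\gamma$ simultaneously while leaving $A_2$ untouched. Once this reduction is secured, the remaining content of part (ii) reduces to the cofactor identity $M(P)=C(\mathrm{Adj}\,P)^tC$ and a routine matrix substitution.
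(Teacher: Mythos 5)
Your overall strategy coincides with the paper's: identify the objects via Lemma \ref{restricciones}, use $N(d,t)$ to normalize the $A_1$-block and $\gamma$ to zero, observe that $A_2$ (hence $A_2'$) is an $N$-invariant, and read off the classification from the $H$-action, which acts on the $\mathfrak{s}_2$-block through the adjugate, giving the twisted congruence $CB_2C=({\rm Adj}\, P)^tCA_2C({\rm Adj}\, P)$. Part (i) is fine: taking $B=-\tfrac12 W_\gamma^{-1}A$ in the unipotent step does kill the $A$-block (using that $W_\gamma=\gamma C$ is symmetric with $C^2=I$), and a determinant-$1/\gamma$ element of $H(3,2)$ rescales $\gamma$ to $1$; the paper merely packages both steps into one explicit matrix.

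The gap is in the $N(3,3)$-reduction of part (ii), and it is exactly the one non-routine point of the theorem. You propose to kill $A_1$ and $\gamma$ "case-by-case according to which entry of $A_2$ is nonzero," mimicking the $\n_{2,5}$ argument. That template is calibrated to the wrong step: killing the $(\mathfrak{s}_1,\mathfrak{s}_2)$-block $W_\gamma$ reduces (as the paper shows) to a single scalar linear equation in the entries of $U$, solvable whenever $A_2\neq 0$, so one nonzero entry indeed suffices there. But killing an arbitrary symmetric $A_1$ (six free parameters) requires solving $Y=V^tA_2'^{\,t}+A_2'V$ for $V\in M_{8\times 3}(\mathbb{K})$ for every symmetric $Y$, i.e. the surjectivity of $V\mapsto A_2'V$, which holds precisely because ${\rm rank}\,A_2'=3$ — and this is where Lemma \ref{restricciones}(i), ${\rm rank}\,A_2\geq 2\Leftrightarrow{\rm rank}\,A_2'=3$, enters. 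Your write-up never connects the rank hypothesis to this surjectivity; a single nonzero entry of $A_2$ is not enough, and without this mechanism the claimed reduction to $H(3,3)$-orbits on $B_{3,3}^{\mathbf{0};0;A_2}$ (needed for both directions of the isomorphism criterion) is unproved. Once that linear-algebra step is supplied, the rest of your argument (the $N$-invariance of $A_2$, the semidirect decomposition ${\rm Aut}=H\cdot N$ for necessity, and the cofactor identity for the $H$-action, where your transpose on ${\rm Adj}\,P$ is harmless since $P$ ranges over all of $GL(3)$) matches the paper.
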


\begin{proof} The first assertion in (i) follows from Lemma \ref{restricciones}. Now, let  $A_1={\small\left(
\begin{array}{ccc}
p& q& r\\
q& s&t\\
r& t&u
\end{array}
\right)}$ and $\gamma\neq 0$
be and consider $\theta_P\in \text{Aut}\,\n_{2,3}$ the automorphism given by the matrix (in the Hall basis $\mathcal{H}_{3,2}$):$$P=\left( \begin{array} {ccc|ccc}\gamma & 0 & 0 & 0 & 0 & 0\\ 0 & 1 & 0 & 0 & 0& 0\\ 0& 0 & 1 & 0 & 0 & 0\\ \hline  0 & 0 &\frac{u}{2} & \gamma & 0 & 0 \\ 0 &-\frac{s}{2} & -t  & 0 & \gamma & 0\\ \frac{p}{2\gamma} & \frac{q}{\gamma}  & \frac{r}{\gamma} &  0 & 0 & 1
\end{array}\right).
$$We have that  $P^tB_{3,2}^{\mathbf{0}_{3\times 3};1}P=B_{3,2}^{A_1
;\gamma}
$, which proves the second assertion in (ii). 

Let us assume now that $B_{3,3}^{A_1;\gamma;A_2}\in Obj ({\bf Sym_0(3,3)})$. So the matrix of this form is as in (\ref{A1A2}) and (\ref{A2A2prima}). 
The elements in  the automorphism group ${\rm Aut}\, \mathfrak{n}_{3,3}$ with respect to the Hall basis $\mathcal{H}_{3,3}$ have the matrix form (the $3\times 3$ matrix $C$ is defined in (\ref{Wgamma-C})):$$
X(P;U,V)=\left( \begin{array} {c|c|c} P& 0 & 0\\\hline U&C({\rm Adj}\, P)C&0\\\hline V&U^\prime&P^{\prime\prime} \end{array}\right)
$$where $P\in GL(3)$, $U\in M_{3\times 3}(\mathbb{K}), V\in M_{8\times 3}(\mathbb{K})$, $P^{\prime\prime}\in M_{8\times 8}$ is completely determined by $P$ and $ U^\prime \in M_{8\times 3}(\mathbb{K})$ is related to $U$ as follows$$U={\small\left(
\begin{array}{ccc}
x_1& x_2& x_3\\
x_4& x_5&x_6\\
x_7& x_8&x_9
\end{array}
\right)},U'^t={\small\left(
\begin{array}{cccccccc}
x_2&-x_1&-x_8&x_5&x_8-x_4&0&-x_7&0\\
x_3&0&-x_9-x_1&x_6&x_9&-x_4&0&-x_7\\
0&x_3&-x_2&0&x_6&-x_5&x_9&-x_8
\end{array}
\right)}.$$
The automorphisms $\theta_{X(P;0,0)}\in \mathrm{Aut}\, (\n_{3,3})$ given by the matrix $X(P;0,0)$ are those corresponding to the subgroup $H(3,3)$. For any arbitrary $\theta_{X(P;0,0)}$, the matrix of $(B_{3,3}^{A_1;\gamma;A_2})_{\theta_{X(P;0,0)}}$ is
$$ \left ( \begin{array} {c|c|c} P^t& 0 & 0 \\ \hline 0 & C({\rm Adj}\, P^t)C& 0 \\ \hline 0 & 0 & P^{\prime\prime t}\end{array}\right) 
\left ( \begin{array} {c|c|c}A_1& W_\gamma& A_2^\prime\\ \hline W_\gamma& A_2& 0 \\ \hline A_2^{\prime t} & 0 & 0\end{array} \right)
\left ( \begin{array} {c|c|c} P& 0 & 0 \\ \hline 0 & C({\rm Adj}\, P)C& 0 \\ \hline 0 & 0 & P^{\prime\prime}\end{array}\right)=$$
 $$\left ( \begin{array} {c|c|c}P^tA_1P & W_{\gamma{\rm det}\, P}& P^tA_2^\prime P^{\prime\prime} \\ \hline  W_{\gamma{\rm det}\, P} & C({\rm Adj}\, P^t)CA_2C({\rm Adj}\, P)C& 0 \\ \hline P^{\prime\prime t}A_2^{\prime t}P& 0 & 0 \end{array} \right).$$

The automorphisms $\theta_{X(I_3;U,V)}$ belong to the subgroup $N(3,3)$. For any arbitrary $\theta_{X(I_3;U,V)}$, the matrix of the bilinear form $(B_{3,3}^{A_1;\gamma;A_2})_{\theta_{X(I_3;U,V)}}$ is
$$ \left ( \begin{array} {c|c|c} I_3& U^t & V^t \\ \hline 0 & I_3& U^{\prime t} \\ \hline 0 & 0 & I_8\end{array}\right) 
\left ( \begin{array} {c|c|c}A_1& W_\gamma& A_2^\prime\\ \hline W_\gamma & A_2& 0 \\ \hline A_2^{\prime t} & 0 & 0\end{array} \right)
\left ( \begin{array} {c|c|c} I_3& 0 & 0 \\ \hline U & I_3& 0 \\ \hline V & U^\prime & I_8\end{array}\right)=$$
 $$\left ( \begin{array} {c|c|c} A_1+U^tW_\gamma+V^tA_2^{\prime t} +W_\gamma U+U^tA_2U+A_2^\prime V & W_\gamma+U^tA_2+A_2^ \prime U^\prime & A_2^\prime \\ \hline W_\gamma+A_2U+U^{\prime t} A_2^{\prime t} & A_2& 0 \\ \hline A_2^{\prime t}& 0 & 0 \end{array} \right).$$
The matrix equation in $U$,
\begin{equation}\label{redu2}
W_\gamma+U^tA_2+A_2^\prime U^{\prime } ={\bf 0}_{3\times 3}
\end{equation}
can be viewed as a homogeneous system of $9$ equations in the variables $x_1,\dots, x_9$, that 
that in fact are always repeated the following equation:
\begin{equation}\label{redu1}
\gamma+cx_1-bx_2+ax_3+ex_4-dx_5+bx_6+fx_7-ex_8+cx_9=0.
\end{equation}
Since $A_2\neq 0$, it is immediate that, for any $\gamma\in \mathbb{K}$, we can find adequate entries for $U$ so that the identity (\ref{redu1}) follows. Hence, any form $B_{3,3}^{A_1;\gamma;A_2}$ is isomorphic to a form $B_{3,3}^{Y;0;A_2}$ for some symmetric $3\times 3$ matrix $Y$.

We also claim that, if ${\rm rank}\, A_2\geq 2$, for any $3\times 3$ symmetric matrix $Y$ there exits an automorphism $\varphi \in N(3,3)$ such that $(B_{3,3}^{{\bf 0}_{3\times 3};0;A_2})_\varphi=B_{3,3}^{Y;0;A_2}$. This assertion is equivalent to find $U\in M_{3\times 3}(\mathbb{K}), V\in M_{8\times 3}(\mathbb{K})$ that  are solutions to the system of matrix equations:

\begin{equation}
{\bf 0}_{3,3}=U^t A_2+A_2'U^{\prime }\quad {\rm and}\quad Y=V^tA_2^{\prime t}+U^tA_2U+A_2^\prime V
\end{equation}

From the above reasoning on the equation (\ref{redu2}), for $\gamma=0$ there exist a $3\times 3$ matrix $U_0$ such that ${\bf 0}_{3,3}=U_0^t A_2+A_2'(U_0)^{\prime}$. Since $Y-U_0^tA_2U_0$ is symmetric, the equation  
\begin{equation}
Y-U_0^tA_2U_0=V^tA_2^{\prime t}+A_2^\prime V
\end{equation}
is reduced to find a matrix $V\in M_{8\times 3}(\mathbb{K})$ such that $V^tA_2^{\prime t}+A_2^\prime V$ is any symmetric $3\times 3$ matrix. This is equivalent to find a $8\times 3$ matrix $V$ such that $A_2^\prime V$ is any $3\times 3$ matrix. For a $3\times 1$ vector $v$, the first column $X$ of the matrix $V$ must satisfy $A_2^\prime X=v$. But this equation has solution because the rank of $A_2^\prime$ is $3$ according to Lemma {restricciones}. In the same vein, we can obtain the second and third columns of $V$ and so $A_2^\prime V$ is any $3\times 3$ matrix. This proves our claim.

Now we will prove the equivalence in (ii). Let $B_{3,3}^{A_1;\delta;A_2}$ and $B_{3,3}^{B_1;\gamma;B_2}$ be isomorphic objects in ${\bf Sym_0(3,3)}$. Then,  there exists $\tau\in \text{Aut}\,\n_{2,3}=H(3,3)N(3,3)$ such that $(B_{3,3}^{A_1;\delta;A_2})_\tau=B_{3,3}^{B_1;\gamma;B_2}$. From the previous reasoning, we can assume that $\delta=\gamma=0$. Decomposing, $\tau=\sigma\varphi$ with $\sigma\in H(3,3)$ and $\varphi\in N(3,3)$, we have $B_{3,3}^{B_1;0;B_2}=((B_{3,3}^{A_1;0;A_2})_\sigma)_\varphi$ and therefore $B_2=C({\rm Adj}\, P)^tCA_2C({\rm Adj}\, P)C$.

For the converse, suppose  $B_2=C({\rm Adj}\, P)^tCA_2C({\rm Adj}\, P)C$ and ${\rm rank}\, A_2\geq 2$. From the automorphism $\sigma\in H(3,3)$ given by the matrix $X(P;0,0)$,
$$(B_{3,3}^{A_1;0;A_2})_\sigma=B_{3,3}^{P^tA_1P;0;B_2}.
$$
As ${\rm rank}\, B_2={\rm rank}\, A_2\geq 2$, there exist $\varphi_1, \varphi_2 \in N(3,3)$ such that $((B_{3,3}^{P^tA_1P;0;B_2})_{\varphi_1})_{\varphi_2}=(B_{3,3}^{0;0;B_2})_{\varphi_2}=B_{3,3}^{B_1;0;B_2}$.  Hence $(B_{3,3}^{A_1;0;A_2})_{\sigma\varphi_1\varphi_2}=B_{3,3}^{B_1;0;B_2}$, which proves the isomorphism.
\end{proof}

\begin{rem} In item (ii) of Theorem \ref{sym3}, if ${\rm rank\, A_2}=1$, the equality  $CB_2C=({\rm Adj}\, P)^tCA_2C({\rm Adj}\, P)$ is a necessary condition of isomorphism, but it is not sufficient. The invariant forms $B_{3,3}^{A_1;0;A_2}$ and $B_{3,3}^{{\mathbf 0};0;A_2}$  with $$
A_1={\small\left(
\begin{array}{ccc}
0& 0& 0\\
0& 0&0\\
0& 0&1
\end{array}
\right)}\qquad {\rm and}\qquad A_2={\small\left(
\begin{array}{ccc}
1& 0& 0\\
0& 0&0\\
0& 0&0
\end{array}
\right)}$$+are not isomorphic elements in ${\bf Sym_0(3,3)}$
\end{rem}

\begin{coro}\label{type3} Over any field ${\mathbb K}$ of characteristic $0$, up to metric isomorphism the indecomposable quadratic Lie algebras of type $3$ and nilpotent indices $2$ or $3$ are:

\begin{enumerate}

\item[{\rm (i)}] The quadratic Lie algebra $(\mathfrak{n}_{3,2},B_{3,2}^{\mathbf{0}_{3\times 3};1})$.

\item[{\rm (ii)}] The quadratic Lie algebras $(\frac{\mathfrak{n}_{3,3}}{{\rm Ker}\, B_{3,3}^{0;0;A}}, \overline{B_{3,3}^{0;0;A}})$ for any $3\times 3$ symmetric matrix $A$ of rank $\geq 2$. Moreover, $(\frac{\mathfrak{n}_{3,3}}{{\rm Ker}\, B_{3,3}^{0;0;A}}, \overline{B_{3,3}^{0;0;A}})$ and $(\frac{\mathfrak{n}_{3,3}}{{\rm Ker}\, B_{3,3}^{0;0;B}}, \overline{B_{3,3}^{0;0;B}})$ are isometrically isomorphic if and only if $CBC=({\rm Adj}\, P)^tCAC({\rm Adj}\, P)$ for some $3\times 3$ regular matrix $P$. 
\end{enumerate}
In (ii),  if $\mathbb {K}^2= \mathbb {K}$ or $\mathbb {K}=\mathbb {R}$, then $(\frac{\mathfrak{n}_{3,3}}{{\rm Ker}\, B_{3,3}^{0;0;A}}, \overline{B_{3,3}^{0;0;A}})$ and $(\frac{\mathfrak{n}_{3,3}}{{\rm Ker}\, B_{3,3}^{0;0;B}}, \overline{B_{3,3}^{0;0;B}})$ are isometrically isomorphic if and only if $A$ and $B$ are congruent. 
\end{coro}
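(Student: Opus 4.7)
The plan is to combine Theorem~\ref{sym3} (the classification of objects of ${\bf Sym_0(3,t)}$ for $t=2,3$), Lemma~\ref{restricciones} (which pinpoints the forms yielding indecomposable quadratic Lie algebras), and the categorical equivalence of Corollary~\ref{isomorfismo} (which upgrades bilinear form isomorphism to isometric isomorphism of the associated quadratic Lie algebras). I will handle $t=2$ and $t=3$ in turn, closing with the reduction of the $\mathrm{Adj}$-congruence to ordinary congruence over the specified base fields.

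For part (i), Theorem~\ref{sym3}(i) states that every object of ${\bf Sym_0(3,2)}$ is isomorphic to $B_{3,2}^{{\bf 0}_{3\times 3};1}$, so Corollary~\ref{isomorfismo} gives that every quadratic Lie algebra of type $3$ and nilpotent index $2$ is isometrically isomorphic to $(\mathfrak{n}_{3,2}, B_{3,2}^{{\bf 0}_{3\times 3};1})$. For indecomposability I would argue by contradiction: a nontrivial orthogonal decomposition $\mathfrak{n}_{3,2}=\bigoplus_j I_j$ into quadratic ideals forces each piece to have type $\le 2$ and nilpotent index $\le 2$. Corollary~\ref{type2} forbids any type-$2$ quadratic Lie algebra of index $2$, so every type-$2$ piece would have index $1$, i.e.\ be abelian; and every type-$1$ piece is $1$-dimensional abelian. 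Hence all $I_j$ would be abelian, forcing $\mathfrak{n}_{3,2}$ to be abelian, contradicting $\mathfrak{n}_{3,2}^2\neq 0$.

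For part (ii), Lemma~\ref{restricciones} identifies the indecomposable $\mathbf{Q_{3,3}}(B)$ as coming precisely from the forms $B_{3,3}^{A_1;\gamma;A_2}$ with $\mathrm{rank}\,A_2\ge 2$, while the $N(3,3)$-reduction around equation~(\ref{redu2}) in the proof of Theorem~\ref{sym3}(ii) shows that every such form is isomorphic in ${\bf Sym_0(3,3)}$ to $B_{3,3}^{0;0;A_2}$. Applying Corollary~\ref{isomorfismo} together with the moreover clause of Theorem~\ref{sym3}(ii), the indecomposable quadratic Lie algebras of type $3$ and nilpotent index $3$ are exactly the listed $(\mathfrak{n}_{3,3}/\mathrm{Ker}\,B_{3,3}^{0;0;A},\overline{B_{3,3}^{0;0;A}})$ with $\mathrm{rank}\,A\ge 2$, and two such are isometrically isomorphic iff $CBC=(\mathrm{Adj}\,P)^t CAC(\mathrm{Adj}\,P)$ for some regular $3\times 3$ matrix $P$.

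The main obstacle is the final clause of (ii): reducing this $\mathrm{Adj}$-congruence to the ordinary congruence of $A$ and $B$ under the stated hypotheses. Using $C^2=I_3$ (a direct check from~(\ref{Wgamma-C})) and $C^t=C$, the relation rewrites as $B=R^t A R$ with $R=C(\mathrm{Adj}\,P)C$, so it suffices to show every regular $R$ arises in this way. Setting $S=CRC$, this reduces to solving $\det(P)\,P^{-1}=S$ for some regular $P$; taking determinants forces $\det(P)^2=\det S=\det R$, so existence of $P$ amounts to the existence of a square root of $\det R$ in $\mathbb{K}$. This is automatic when $\mathbb{K}^2=\mathbb{K}$. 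Over $\mathbb{R}$, given any congruence $B=R^t A R$ I may replace $R$ by $-R$ (so $\det(-R)=-\det R$ in dimension $3$) to enforce $\det R>0$, after which a real square root exists. In both cases the two notions of congruence coincide, completing (ii).
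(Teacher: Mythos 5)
Your proposal is correct and follows essentially the same route as the paper: Theorem~\ref{sym3} plus Corollary~\ref{isomorfismo} for (i) and (ii), and for the final clause the same reduction of $\mathrm{Adj}(P)=CRC$ to extracting a square root of $\det R$ via $P=\sqrt{\det R}\,C(R^{-1})^tC$, with the sign flip $R\mapsto -R$ over $\mathbb{R}$. The only (harmless) deviation is that you prove indecomposability in (i) directly from Corollary~\ref{type2}, whereas the paper gets it from the decomposability analysis in Lemma~\ref{restricciones}; both arguments are valid.
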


\begin{proof} We just need to apply the previous Therorem \ref{sym3} and Corollary \ref{isomorfismo}
to prove (i) and  (ii). For the last statement, since $C^t=C$ we only need to show that if $A$ and $B$ are congruent then $(\frac{\mathfrak{n}_{3,3}}{{\rm Ker}\, B_{3,3}^{0;0;A}}, \overline{B_{3,3}^{0;0;A}})$ and $(\frac{\mathfrak{n}_{3,3}}{{\rm Ker}\, B_{3,3}^{0;0;B}}, \overline{B_{3,3}^{0;0;B}})$ are isometrically isomorphic. First we notice that if $D$ is a  regular matrix, $ Adj (D)=det (D)(D^{-1})^t $. We apply this now. We suppose that $B= R^tAR$, and we want to find a regular matrix $P$ such that $R= CAdj(P)C$. Since $C=C^{-1}$ this implies that $Adj(P)= CRC$. So if we find a matrix $P$ such that $Adj(P)=CRC$, then we have finished.  In the case $\mathbb {K}^2= \mathbb {K}$, if  we take $P=\sqrt {det(R)} C(R^{-1})^t C$,  we can check that $Adj(P)=CRC$. Indeed, as we have shown $ Adj (P)=det (P)(P^{-1})^t $ and then
$$Adj(P)= (\sqrt {det(R)})^3 det (R)^{-1} (\sqrt{det (R)})^{-1} CRC=CRC$$
In the case  $\mathbb {K}= \mathbb{R}$, we observe that if $R$ is such that $det (R)<0$ we can consider $S=-R$ and then also $S^tA S=B$ and $det (S)>0$. So we can suppose that $det (R)>0$ and then taking $P=\sqrt {det(R)} C(R^{-1})^t C$,  we have also $Adj(P)=CRC$.
\end{proof}

\section{$\mathbb{K}$ algebraically closed and $\mathbb{K} = \mathbb{R}$}

In this final section, first we restrict ourshelves to algebraically closed fields to reach, up to isomorphism, the nilpotent quadratic Lie algebras on $2$ and $3$ generators and small nilpotent index. In this case, we note that for a given $B\in GL(3)$, $B=Adj(A)$ with $A=\sqrt{{\rm det}\, B}(B^{-1})^t$. 

\begin{theo} \label{teoremafinal1} Up to isomorphism, the indecomposable quadratic nilpotent Lie algebras over any algebraically closed filed $\mathbb{K}$ of characteristic zero ${\mathbb K}$ of type 1 or of type 2 and nilindex $\leq 5$ or of  type 3 and nilindex $\leq 3$ are isomorphic to one of the following Lie algebras (where $\phi, \varphi, \varphi_i, \psi $ and $\psi_i$ are symmetric bilinear forms):
\begin{enumerate}
\item[\emph{(i)}] The $1$-dimensional abelian Lie algebra $(\mathfrak{n}_{1,1}, \phi)$  with basis $\{ a_1\}$ and $\phi(a_1,a_1)=1$.

\item[\emph{(ii)}] The $5$-dimensional free nilpotent Lie algebra $(\mathfrak{n}_{2,3},\varphi)$ with basis $\{ a_i\}_{i=1}^5$ and nonzero products $[a_2,a_1]=-[a_1,a_2]=a_3$, $[a_3,a_1]=-[a_1,a_3]=a_4$ and $[a_3,a_2]=-[a_2,a_3]=a_5$ where $\varphi(a_i,a_j)=(-1)^{i-1}$ for $i\leq j$ and $i+j=6$ and $\varphi(a_i,a_j)=0$ otherwise.

\item[\emph{(iii)}] The $7$-dimensional Lie algebra $(\mathfrak{n}_{2,5}^1, \varphi_1)$ with basis $\{ a_i\}_{i=1}^7$ and nonzero products  $[a_2,a_1]=-[a_1,a_2]=a_3$, $[a_3,a_1]=-[a_1,a_3]=a_4$, $[a_4,a_1]=-[a_1,a_4]=a_5$, $[a_5,a_1]=-[a_1,a_5]=a_6$, $[a_5,a_2]=-[a_2,a_5]=a_7$ and $[a_3,a_4]=-[a_4,a_3]=a_7$ where $\varphi_1(a_i,a_j)=(-1)^i$ for $i\leq j$ and $i+j=8$ and $\varphi_1(a_i,a_j)=0$ otherwise.

\item[\emph{(iv)}] The $8$-dimensional Lie algebra $(\mathfrak{n}_{2,5}^2, \varphi_2)$ with basis $\{ a_i\}_{i=1}^8$ and nonzero products $[a_2,a_1]=-[a_1,a_2]=a_3$, $[a_3,a_1]=-[a_1,a_3]=a_4$,$[a_3,a_2]=-[a_2,a_3]=a_5$ $[a_4,a_1]=-[a_1,a_4]=a_6$, $[a_6,a_1]=-[a_1,a_6]=a_7$, $[a_6,a_2]=-[a_2,a_6]=a_8$, $[a_2,a_5]=-[a_5,a_2]=-a_6, [a_4,a_3]=-[a_3,a_4]=-a_8,$ and $[a_5,a_3]=-[a_3,a_5]=a_7$,  where $\varphi_2(a_i,a_j)=(-1)^i$ for $0\leq i\leq 3$ and $ i+j=9$, $\varphi_2 (a_4,a_4)= \varphi_2(a_5,a_5)=1$ and $\varphi_2(a_i,a_j)=0$ otherwise.

\item[\emph{(v)}] The $6$-dimensional free nilpotent Lie algebra $(\mathfrak{n}_{3,2},\psi)$ with basis $\{ a_i\}_{i=1}^6$ and nonzero products $[a_2,a_1]=-[a_1,a_2]=a_4$, $[a_3,a_1]=-[a_1,a_3]=a_5$ and $[a_3,a_2]=-[a_2,a_3]=a_6$ where $\psi(a_i,a_j)=(-1)^{i-1}$ for $i\leq j$ and $i+j=7$ and $\psi(a_i,a_j)=0$ otherwise.

\item[\emph{(vi)}] The $8$-dimensional Lie algebra $(\mathfrak{n}_{3,3}^1,\psi_1)$ with basis $\{ a_i\}_{i=1}^8$ and nonzero products $[a_2,a_1]=-[a_1,a_2]=a_4$, $[a_3,a_1]=-[a_1,a_3]=a_5$, $[a_4,a_1]=-[a_1,a_4]=a_6$, $[a_4,a_2]=-[a_2,a_4]=a_7$, $[a_5,a_1]=-[a_1,a_5]=a_8$ and $[a_5,a_3]=-[a_3,a_5]=a_7$ where $\psi_1(a_4,a_4)=\psi_1(a_5,a_5)=1$, $\psi_1(a_1,a_7)=1=-\psi_1(a_2,a_6)=-\psi_1(a_3,a_8)$ and $\psi_1(a_i,a_j)=0$ otherwise.

\item[\emph{(vii)}] The $9$-dimensional Lie algebra $(\mathfrak{n}_{3,3}^2,\psi_2)$ with basis $\{ a_i\}_{i=1}^9$ and nonzero products $[a_2,a_1]=-[a_1,a_2]=a_4$, $[a_3,a_1]=-[a_1,a_3]=a_5$ $[a_3,a_2]=-[a_2,a_3]=a_6$, $[a_4,a_1]=-[a_1,a_4]=a_7$, $[a_4,a_2]=-[a_2,a_4]=a_8$, $[a_5,a_1]=-[a_1,a_5]=a_9$, $[a_5,a_3]=-[a_3,a_5]=a_8, [a_6,a_3]=-[a_3,a_6]=-a_7$ and $[a_6,a_2]=-[a_2,a_6]=a_9$ where $\psi_2(a_4,a_4)=\psi_2(a_5,a_5)=\psi_2(a_6,a_6)=1$, $\psi_2(a_1,a_8)=1=-\psi_2(a_2,a_7)=-\psi_2(a_3,a_9)$ and $\psi_2(a_i,a_j)=0$ otherwise.
\end{enumerate}

\noindent Any nonabelian  quadratic Lie algebra of type $\leq 2$ is indecomposable. The nonabelian decomposable Lie algebras of type $3$ and nilpotent index $\leq 3$ are the orthogonal sum $(\mathfrak{n}_{1,1}, \phi)\oplus L$ where $L$ is a quadratic Lie algebra as in item \emph{(ii), (iii) or (iv)}.
\end{theo}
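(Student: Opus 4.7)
The plan is to consolidate the classification results obtained in Section 5 -- namely Corollaries \ref{type2} and \ref{type3}, together with the description of $B_{d,1}^A$ for type $1$ -- and to exploit two algebraic features of an algebraically closed field $\mathbb{K}$ of characteristic zero: the quotient $\mathbb{K}^*/(\mathbb{K}^*)^2$ is trivial, and every element of $\mathbb{K}$ is a cube (hence in particular $\mathbb{K}^2=\mathbb{K}$). Combined with the fact that rank is a complete invariant of congruence for symmetric matrices over $\mathbb{K}$, these properties collapse the parameter spaces of the two corollaries to at most two orbits each, and it only remains to exhibit a canonical Hall-basis representative per orbit and read off the bracket table and the values of the bilinear form.

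\paragraph{Types $1$ and $2$.}
Item (i) is immediate. For (ii), Corollary \ref{type2}(i) identifies indecomposable type-$2$ nilindex-$3$ algebras with $\mathbb{K}^*/(\mathbb{K}^*)^2=\{1\}$, giving a single class from $B_{2,3}^{0;1}$; reading its $5\times 5$ matrix in the Hall basis $\mathcal{H}_{2,3}$ produces the data in (ii). For (iii) and (iv), Corollary \ref{type2}(ii) parametrises orbits by nonzero symmetric $2\times 2$ matrices modulo $A\sim(\det P)^2 P^t A P$; since every scalar is a cube, the factor $(\det P)^2$ can be absorbed and the invariant reduces to $\mathrm{rank}\,A\in\{1,2\}$. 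Taking $A=\mathrm{diag}(1,0)$ and $A=I_2$ and quotienting $\mathfrak{n}_{2,5}$ by the kernel of $B_{2,5}^{0;0;A}$ -- read off from the block matrix in Example \ref{formasinvariantespequenas} -- yields the $7$- and $8$-dimensional algebras in (iii) and (iv).

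\paragraph{Type $3$.}
Item (v) follows from Corollary \ref{type3}(i), realised by $B_{3,2}^{0;1}$ in the Hall basis $\mathcal{H}_{3,2}$. For (vi) and (vii), Corollary \ref{type3}(ii) parametrises orbits by $3\times 3$ symmetric matrices $A$ of rank $\geq 2$ modulo $CBC=(\mathrm{Adj}\,P)^tCAC(\mathrm{Adj}\,P)$. The remark preceding the theorem -- that every $B\in GL(3)$ equals $\mathrm{Adj}(A)$ for $A=\sqrt{\det B}\,(B^{-1})^t$, using the algebraic closure to extract the square root -- shows the adjugate map is surjective on $GL(3)$, so this equivalence collapses to ordinary congruence of $A$, with invariant $\mathrm{rank}\,A\in\{2,3\}$. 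Taking $A=\mathrm{diag}(1,1,0)$ and $A=I_3$ yields the $8$- and $9$-dimensional algebras in (vi) and (vii), with kernels and bracket tables computed from the block matrix of $B_{3,3}^{0;0;A}$ in Example \ref{formasinvariantespequenas2} and the dimension count forced by Lemma \ref{restricciones}.

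\paragraph{Decomposable case and the main obstacle.}
A decomposable quadratic nilpotent Lie algebra splits orthogonally into indecomposable summands whose types add up to the total type. For type $\leq 2$ the only nontrivial split is $2=1+1$, forcing both summands to be abelian, so any nonabelian quadratic algebra of type $\leq 2$ is indecomposable. For type $3$ and nilindex $\leq 3$, the only splits are $3=1+2$ and $3=1+1+1$, giving orthogonal sums of $(\mathfrak{n}_{1,1},\phi)$ with either a type-$2$ algebra from (ii)--(iv) or (if $L$ is forced abelian) with itself, yielding exactly the asserted form. The conceptual content is entirely covered by the prior corollaries; the main obstacle is computational, namely producing on an explicit complement to $\ker B_{d,t}^{0;0;A}$ inside the Hall basis the bracket tables and form values displayed in (iv), (vi), (vii). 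These verifications are routine but lengthy, relying on the semidirect decomposition of $\mathrm{Aut}\,\mathfrak{n}_{d,t}$ from Section 3 and the explicit shape of $A_2'$ in (\ref{A2A2prima}).
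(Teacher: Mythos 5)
Your proposal takes essentially the same route as the paper's own proof: invoke Corollaries \ref{type2} and \ref{type3}, use that over an algebraically closed field congruence of symmetric matrices is determined by rank (absorbing the $({\rm det}\, P)^2$ factor via cube roots in the type-$2$, nilindex-$5$ case, and the surjectivity of the adjugate map in the type-$3$ case), select the canonical representatives ${\rm diag}(1,0)$, $I_2$, ${\rm diag}(1,1,0)$, $I_3$, and pass to the quotient by the kernel of the corresponding form. The only difference is one of detail -- the paper explicitly writes out spanning sets for each kernel to extract the bracket tables, while you defer that computation as routine -- so the arguments coincide in substance.
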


\begin{proof} 
Let us assume now that $(\mathfrak{n}, \varphi)$ is an indecomposable nilpotent quadratic nonabelian Lie algebra. If $\n$ has type 2, according to Corollary \ref{type2}, either $(\mathfrak{n}, \varphi)$ is as in item (ii) or $(\mathfrak{n}, \varphi)=(\frac{\mathfrak{n}_{2,5}}{{\rm Ker}\, B_{2,5}^{0;0;A}},\overline{B_{2,5}^{0;0;A}})$ where, up to isometric isomorphism, $A$ is one of the following matrices:
$$A_1=\begin{pmatrix}1&0\\0&0\end{pmatrix}\quad {\rm or}\quad A_2=\begin{pmatrix}1&0\\0&1\end{pmatrix}$$and $B_{2,5}^{0;0;A}$ is as described in Example \ref{formasinvariantespequenas}. The rank and the dimension of the kernel  of the form ${\rm B_{2,5}^{0;0;A_1}}$ are both $7$, and moreover
$$
{\rm Ker}\, B_{2,5}^{0;0;A_1}=span<[[x_2,x_1],x_2],[[[x_2,x_1],x_1],x_2],[[[x_2,x_1],x_2],x_2],$$
$$[[[x_2,x_1],x_1],x_1],x_2]+[[[x_2,x_1],x_1],[x_2,x_1]], [[[[x_2,x_1],x_1],x_2],x_2],[[[x_2,x_1],x_2],[x_2,x_1]],$$ $$[[[[x_2,x_1],x_2],x_2],x_2]>.
$$
The rank and the dimension of the kernel of the form ${\rm B_{2,5}^{0;0;A_2}}$ are $8$ and $6$, respectively, and
$$
{\rm Ker}\, B_{2,5}^{0;0;A_2}=span<[[[x_2,x_1],x_1],x_2],[[[x_2,x_1],x_2],x_2]-[[[x_2,x_1],x_1],x_1],$$ $$[[[x_2,x_1],x_1],[x_2,x_1]]+[[[x_2,x_1],x_1],x_1],x_2], [[[[x_2,x_1],x_1],x_2],x_2],$$ $$[[[x_2,x_1],x_2],[x_2,x_1]]-[[[[x_2,x_1],x_1],x_1],x_1],[[[[x_2,x_1],x_2],x_2],x_2]-[[[[x_2,x_1],x_1],x_1],x_2]>.
$$In this way we obtain the quadratic algebras in items (iii) and (iv).

If $\n$ has type 3, from Corollary \ref{type3} either $(\mathfrak{n}, \varphi)$ is as in item (v) or $(\mathfrak{n}, \varphi)=(\frac{\mathfrak{n}_{3,3}}{{\rm Ker}\, B_{3,3}^{0;0;A}},\overline{B_{3,3}^{0;0;A}})$ where, up to isometric isomorphism, $A$ is one of the following matrices:
$$B_1=\begin{pmatrix}1&0&0\\0&1&0\\0&0&0\end{pmatrix}\quad {\rm or}\quad B_2=\begin{pmatrix}1&0&0\\0&1&0\\0&0&1\end{pmatrix},$$and $B_{3,3}^{0;0;A}$ is described in Example \ref{formasinvariantespequenas2}. The rank and the dimension of the kernel  of the form ${\rm B_{3,3}^{0;0;B_1}}$ are $8$ and $6$ respectively, and
$$
{\rm Ker}\, B_{3,3}^{0;0;B_1}=span<[x_3,x_2],[[x_2,x_1],x_3],[[x_3,x_1],x_2],$$ $$[[x_3,x_1],x_3]-[[x_2,x_1],x_2],
[[x_3,x_2],x_2],[[x_3,x_2],x_3]>.
$$
The rank and the dimension of the form ${\rm B_{3,3}^{0;0;B_2}}$ are $9$ and $5$ respectively, and
$$
{\rm Ker}\, B_{3,3}^{0;0;B_2}=span<[[x_2,x_1],x_3],[[x_3,x_1],x_2],[[x_3,x_1],x_3]-[[x_2,x_1],x_2],$$ $$[[x_3,x_2],x_2]-[[x_3,x_1],x_1],[[x_3,x_2],x_3]-+x_2,x_1],x_1]>$$
So we get the quadratic algebras in items (vi) and (vii). The final assertion is straightforeword.
\end{proof}

\bigskip

Now, we consider the case $\mathbb{K}=\mathbb{R}$. 

\begin{theo} Up to isomorphism, the indecomposable quadratic nilpotent real Lie algebras of type 1 or of type 2 and nilindex $\leq 5$ or of type 3 and nilindex $\leq 3$ are the following (where $\varphi_3, \psi_3$ and $\psi_4$ are symmetric bilinear forms):
\begin{enumerate}
\item[\emph{(i)}] The quadratic Lie algebras $(\mathfrak{n}, \pm \varphi)$ where $\mathfrak{n}$ and $\varphi$ is as described in items \emph{(i), (ii), (iii), (iv), (vi), (vii)} of Theorem \ref{teoremafinal1} and $(\mathfrak{n}_{3,2}, \psi)$ as in item \emph{(v)}.

\item[\emph{(ii)}] The $8$-dimensional Lie algebra $(\mathfrak{n}_{2,5}^3, \varphi_3)$ with basis $\{ a_i\}_{i=1}^8$ and nonzero products $[a_2,a_1]=-[a_1,a_2]=a_3$, $[a_3,a_1]=-[a_1,a_3]=a_4$, $[a_4,a_1]=-[a_1,a_4]=a_6$, $[a_6,a_1]=-[a_1,a_6]=a_7$,  ,$[a_3,a_2]=-[a_2,a_3]=a_5$, ,$[a_5,a_2]=-[a_2,a_5]=-a_6$, $[a_6,a_2]=-[a_2,a_6]=a_8$, ,$[a_4,a_3]=-[a_3,a_4]=-a_8$ and ,$[a_5,a_3]=-[a_3,a_5]=-a_7$ where $\varphi_3(a_i,a_j)=(-1)^i$ for $i\leq j$, $i=1,2,3$ and $i+j=9$, $\varphi_3(a_4,a_4)=1=-\varphi_3(a_5,a_5)$ and $\varphi_3(a_i,a_j)=0$ otherwise.

\item[\emph{(iii)}] The $8$-dimensional Lie algebra $(\mathfrak{n}_{3,3}^3,\psi_3)$ with basis $\{ a_i\}_{i=1}^8$ and nonzero products $[a_2,a_1]=-[a_1,a_2]=a_4$, $[a_3,a_1]=-[a_1,a_3]=a_5$, $[a_4,a_1]=-[a_1,a_4]=a_6$, $[a_5,a_1]=-[a_1,a_5]=a_8$, $[a_4,a_2]=-[a_2,a_4]=a_7$ and $[a_5,a_3]=-[a_3,a_5]=-a_7$ where $\psi_3(a_4,a_4)=-\psi_3(a_5,a_5)=1$, $\psi_3(a_1,a_7)=1=-\psi_3(a_2,a_6)=\psi_3(a_3,a_8)$ and $\psi_3(a_i,a_j)=0$ otherwise.

\item[\emph{(iv)}] The $9$-dimensional quadratic Lie algebras $(\mathfrak{n}_{3,3}^4,\pm \psi_4)$ with basis $\{ a_i\}_{i=1}^9$ and nonzero products $[a_2,a_1]=-[a_1,a_2]=a_4$, $[a_3,a_1]=-[a_1,a_3]=a_5$, $[a_4,a_1]=-[a_1,a_4]=a_7$, $[a_5,a_1]=-[a_1,a_5]=a_9$ $[a_3,a_2]=-[a_2,a_3]=a_6$, $[a_4,a_2]=-[a_2,a_4]=a_8$, $[a_6,a_2]=-[a_2,a_6]=-a_9$, $[a_5,a_3]=-[a_3,a_5]=a_8$  and $[a_6,a_3]=-[a_3,a_6]=a_7$, where $\psi_4(a_4,a_4)=\psi_4(a_5,a_5)=1=-\psi_4(a_6,a_6)$, $\psi_4(a_1,a_8)1=-\psi_4(a_2,a_7)=-\psi_4(a_3,a_9)$ and $\psi_2(a_i,a_j)=0$ otherwise.
\end{enumerate}

\noindent Any nonabelian  quadratic Lie algebra of type $\leq 2$ is indecomposable. The nonabelian decomposable Lie algebras of type $3$ and nilpotent index $\leq 3$ are the orthogonal sum $(\mathfrak{n}_{1,1}, \phi)\oplus L$ where $L$ is a quadratic Lie algebra as in items \emph {(ii), (iii) or (iv)}.
\end{theo}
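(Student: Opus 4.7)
The plan is to mirror the proof of Theorem~\ref{teoremafinal1} case by case, invoking the general classifications of Corollaries~\ref{type2} and~\ref{type3} over fields of characteristic zero and then specializing the scalar and congruence invariants to~$\mathbb{R}$ via Sylvester's law of inertia. The indecomposability claims and the description of the decomposable algebras carry over verbatim from the algebraically closed case, since the underlying structural arguments (Lemma~\ref{restricciones} and the preceding discussion) only use characteristic zero. For type~$1$, real signatures give $(\mathfrak{n}_{1,1},\pm\phi)$; for type~$2$ nilindex~$3$, Corollary~\ref{type2}(i) identifies the parameter set with $\mathbb{R}^*/(\mathbb{R}^*)^2=\{\pm 1\}$, hence $(\mathfrak{n}_{2,3},\pm\varphi)$; for type~$3$ nilindex~$2$, Theorem~\ref{sym3}(i) already gives a single class over any field of characteristic zero, so only $(\mathfrak{n}_{3,2},\psi)$ appears.

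For type~$2$ nilindex~$5$, Corollary~\ref{type2}(ii) reduces the problem to classifying nonzero $2\times 2$ real symmetric matrices modulo $A\sim(\det P)^2P^tAP$. Since $(\det P)^2>0$, the relation preserves signature, and conversely, after an orthogonal diagonalization of $A$ followed by a positive rescaling obtained from $P=\lambda I$, every class admits a representative in $\{\pm\mathrm{diag}(1,0),\ \pm I_2,\ \mathrm{diag}(1,-1)\}$. The five orbits produce $(\mathfrak{n}_{2,5}^1,\pm\varphi_1)$ and $(\mathfrak{n}_{2,5}^2,\pm\varphi_2)$ from items~(iii)--(iv) of Theorem~\ref{teoremafinal1}, plus exactly one new indefinite rank-$2$ algebra $(\mathfrak{n}_{2,5}^3,\varphi_3)$, which is the one listed in item~(ii).

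For type~$3$ nilindex~$3$, the last assertion of Corollary~\ref{type3} collapses the twisted relation over $\mathbb{R}$ to ordinary congruence of $3\times 3$ symmetric matrices of rank $\geq 2$. By Sylvester, the classes are indexed by signatures $(2,0),(1,1),(0,2),(3,0),(2,1),(1,2),(0,3)$. Pairing $A$ with $-A$ (equivalently $\psi\leftrightarrow-\psi$) identifies $(2,0)\leftrightarrow(0,2)$, $(3,0)\leftrightarrow(0,3)$ and $(2,1)\leftrightarrow(1,2)$, while the class $(1,1)$ is self-paired by a coordinate swap. This yields $(\mathfrak{n}_{3,3}^1,\pm\psi_1)$ and $(\mathfrak{n}_{3,3}^2,\pm\psi_2)$ already present in Theorem~\ref{teoremafinal1}, plus the two genuinely new algebras: $(\mathfrak{n}_{3,3}^3,\psi_3)$ of indefinite rank~$2$ (self-paired), and $(\mathfrak{n}_{3,3}^4,\pm\psi_4)$ of signatures $(2,1)$ and $(1,2)$.

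The main obstacle will be the explicit computation of brackets and bilinear forms on the new quotients. For $(\mathfrak{n}_{2,5}^3,\varphi_3)$ one must exhibit a basis of $\mathrm{Ker}\,B_{2,5}^{0;0;\mathrm{diag}(1,-1)}\subseteq\mathfrak{n}_{2,5}$, and similarly for $\mathrm{Ker}\,B_{3,3}^{0;0;B}$ with $B=\mathrm{diag}(1,-1,0)$ and $B=\mathrm{diag}(1,1,-1)$; once these kernels are written down, the nonzero brackets in items~(ii), (iii), (iv) are read off by projecting the Hall basis of the relevant free nilpotent algebra, and the form values are read off from the templates in Examples~\ref{formasinvariantespequenas} and~\ref{formasinvariantespequenas2}. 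This is the same type of explicit computation carried out in Theorem~\ref{teoremafinal1} for the algebraically closed case; there is no new conceptual difficulty, only careful book-keeping.
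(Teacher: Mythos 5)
Your proposal follows the paper's own proof essentially step for step: both reduce to the signature classification of real symmetric matrices via Corollaries \ref{type2} and \ref{type3}, enumerate the same representative matrices ($\pm\,\mathrm{diag}(1,0)$, $\pm I_2$, $\mathrm{diag}(1,-1)$ for type $2$; the seven signature classes of rank $\geq 2$ for type $3$), and then obtain the new algebras by computing the kernels of $B_{2,5}^{0;0;\mathrm{diag}(1,-1)}$, $B_{3,3}^{0;0;\mathrm{diag}(1,-1,0)}$ and $B_{3,3}^{0;0;\mathrm{diag}(1,1,-1)}$ and passing to the quotient. The only difference is presentational: you spell out why the twisted congruence $(\det P)^2P^tAP$ collapses to ordinary real congruence, which the paper already absorbs into the final statements of those corollaries, and you defer the kernel computations that the paper carries out explicitly.
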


\begin{proof} According to Corollaries \ref{type2} and \ref{type3}, over the real field the classification is given by the rank and the signature of $n\times n$ symmetric matrices for $n=1,2,3$. So, the cases to be considered are:
\begin{itemize}
\item[$\bullet$] $(\mathfrak{n}_{1,1}, \pm \phi)$ with basis $\{ a_1\}$ and $\phi (a_1,a_1)=1$ and $(\mathfrak{n}_{3,2},  \psi)$ as described in item (v) of Therorem \ref{teoremafinal1}.
\item[$\bullet$] $(\mathfrak{n}, \pm\varphi)$ where $(\mathfrak{n}, \pm\varphi)$ is as in item (ii) or  Theorem 6.1, $(\mathfrak{n}, \varphi)=(\frac{\mathfrak{n}_{2,5}}{{\rm Ker}\, B_{2,5}^{0;0;A}},\overline{B_{2,5}^{0;0;A_i}})$ where, up to isometric isomorphism, $A_i$ is one of the following matrices:
$$\pm A_1=\begin{pmatrix}1&0\\0&0\end{pmatrix}, \pm A_2=\begin{pmatrix}1&0\\0&1\end{pmatrix}\quad {\rm or}\quad A_3=\begin{pmatrix}1&0\\0&-1\end{pmatrix}$$and $B_{2,5}^{0;0;A_i}$ is as described in Example \ref{formasinvariantespequenas}.

\item[$\bullet$]$(\mathfrak{n}, \varphi)=(\frac{\mathfrak{n}_{3,3}}{{\rm Ker}\, B_{3,3}^{0;0;A}},\overline{B_{3,3}^{0;0;C_i}})$ where, up to isometric isomorphism, $C_i$ is one of the following matrices:
$$\pm C_1=\begin{pmatrix}1&0&0\\0&1&0\\0&0&0\end{pmatrix}, C_2=\begin{pmatrix}1&0&0\\0&-1&0\\0&0&0\end{pmatrix}, \pm C_3=\begin{pmatrix}1&0&0\\0&1&0\\0&0&1\end{pmatrix}, \pm C_4=\begin{pmatrix}1&0&0\\0&1&0\\0&0&-1\end{pmatrix}$$and $B_{3,3}^{0;0;C_i}$ is as described in Example \ref{formasinvariantespequenas2}. 
\end{itemize}

The matrices $\pm A_1, \pm A_2, \pm C_1, \pm C_3$, and $(\mathfrak{n}_{1,1}, \pm \phi)$, $(\mathfrak{n}_{2,3}, \pm \phi)$ and $(\mathfrak{n}_{3,2}, \psi)$ yield to the quadratic algebras in item (i) of this theorem. A straightforward calculation gives us  the rank and the dimension of the kernel of the form ${\rm B_{2,5}^{0;0;A_3}}$, which are $8$ and $6$ respectively, and
$$
{\rm Ker}\, B_{2,5}^{0;0;A_3}=span<[[[x_2,x_1],x_1],x_2],[[[x_2,x_1],x_1],x_1]+[[[x_2,x_1],x_2],x_2],
$$
$$[[[[x_2,x_1],x_1],x_1],x_2]+[[[x_2,x_1],x_1],[x_2,x_1]], [[[[x_2,x_1],x_1],x_2],x_2], [[[[x_2,x_1],x_1],x_1],x_1]+
$$
$$[[[x_2,x_1],x_2],[x_2,x_1]],[[[[x_2,x_1],x_1],x_1],x_2]+[[[[x_2,x_1],x_2],x_2],x_2]>.
$$The corresponding quotient provides the quadratic algebra described in (ii). The rank and the dimension of the kernel  of the forms ${\rm B_{3,3}^{0;0;C_2}}$ and ${\rm B_{3,3}^{0;0;C_4}}$ are $8$ and $6$ and $9$ and $5$ respectively. Moreover, 
$$
{\rm Ker}\, B_{3,3}^{0;0;C_2}=span<[x_3,x_2],[[x_2,x_1],x_3],[[x_3,x_1],x_2],[[x_2,x_1],x_2]+[[x_3,x_1],x_3],$$
$$[[x_3,x_2],x_2],[[x_3,x_2],x_3]>.
$$and
$$
{\rm Ker}\, B_{3,3}^{0;0;\pm C_4}=span<[[x_2,x_1],x_3],[[x_3,x_1],x_2],-[[x_2,x_1],x_2]+[[x_3,x_1],x_3],$$
$$[[x_3,x_1],x_1]+[[x_3,x_2],x_2],-[[x_2,x_1],x_1]+[[x_3,x_2],x_3]>.
$$

The corresponding two quotients give the quadratic algebras in items (iii) and (iv).
\end{proof}

\bigskip

\begin{rem} 
If we consider the general problem of building a $t$-nilpotent quadratic Lie algebra with $d$ generators beginning with a $B(e_i,e_j)$ with $i+j=t+1$ in $\mathfrak{n}_{d,t}$, one can think that this seems easy, but one has really to check some facts about the final $B$. It takes some work to prove what is needed in a particular case. In fact, we do not know what is necessary in general. This can be a line of research to address in the future. 
\end{rem}

\bigskip

\centerline{ACKNOWLEDGEMENTS }

\bigskip

The authors thank the referee for very valuable suggestions that have improved the paper.

\bigskip

{\noindent
\author{Pilar Benito, Daniel de-la-Concepci\'on, Jes\'us Laliena \footnote {The authors have been
supported by the Spanish Ministerio de Econom\'{\i}a y Competitividad (MTM 2013-45588-CO3-3), and the second author also by a FPU grant (12/03224) of the Spanish Ministerio de Educaci\'on y Ciencia} }
\\{\small Departamento de Matem\'aticas y Computaci\'on}\\
{\small  Universidad de La Rioja}\\
{\small  26004, Logro\~no. Spain}\\
{\small pilar.benito@unirioja.es }, {\small daniel-de-la.concepcion@unirioja.es},  {\small jesus.laliena@unirioja.es}

\end{document}